\documentclass[a4paper,12pt]{article}


\makeatletter
\@addtoreset{footnote}{page}
\makeatother


\usepackage{enumitem}

\usepackage{amsthm}
\usepackage{amsmath,amssymb,latexsym,amsfonts,mathrsfs}


\renewcommand{\Re}{\mathop{\rm Re}}

\newcommand{\eps}{\ensuremath{\varepsilon}}

\renewcommand{\tilde}{\widetilde}

\newcommand{\up}{\ensuremath{\uparrow}}



 %


\newcommand{\bC}{\ensuremath{\mathbb{C}}}

\newcommand{\bE}{\ensuremath{\mathbb{E}}}

\newcommand{\bP}{\ensuremath{\mathbb{P}}}
\newcommand{\bQ}{\ensuremath{\mathbb{Q}}}
\newcommand{\bR}{\ensuremath{\mathbb{R}}}

\newcommand{\cA}{\ensuremath{\mathcal{A}}}

\newcommand{\cC}{\ensuremath{\mathcal{C}}}
\newcommand{\cD}{\ensuremath{\mathcal{D}}}

\newcommand{\cF}{\ensuremath{\mathcal{F}}}

\newcommand{\cL}{\ensuremath{\mathcal{L}}}

\newcommand{\cT}{\ensuremath{\mathcal{T}}}

\theoremstyle{plain}
\newtheorem{Thm}{Theorem}[section]

\newtheorem{Lem}[Thm]{Lemma}
\newtheorem{Prop}[Thm]{Proposition}
\newtheorem{Cor}[Thm]{Corollary}

\theoremstyle{definition}

\newtheorem{Def}[Thm]{Definition}
\newtheorem{Rem}[Thm]{Remark}

\setlength\topmargin{0mm}
\setlength\headheight{0mm}
\setlength\headsep{0mm}
\setlength\topskip{0mm}
\setlength\textheight{230mm}
\setlength\footskip{20mm}

\setlength\oddsidemargin{0mm}
\setlength\evensidemargin{0mm}
\setlength\textwidth{160mm}

\setlength\parindent{5mm}
\setlength\parskip{3mm}

\pagestyle{plain}

\numberwithin{equation}{section}

\makeatletter
\renewcommand\section{\@startsection {section}{1}{\z@}%
                                   {-3.5ex \@plus -1ex \@minus -.2ex}%
                                   {2.3ex \@plus.2ex}%
                                   {\normalfont\large\bf}}
\makeatother

\makeatletter
\renewcommand\subsection{\@startsection {subsection}{1}{\z@}%
                                   {-3.5ex \@plus -1ex \@minus -.2ex}%
                                   {2.3ex \@plus.2ex}%
                                   {\normalfont\normalsize\bf}}
\makeatother

\usepackage[sort]{natbib}
\bibliographystyle{abbrvnat}
\setcitestyle{authoryear,comma,open={(},close={)}} 

\usepackage{hyperref}
\usepackage{mathtools}
\mathtoolsset{showonlyrefs}

\begin{document}
\begin{center}
	\textbf{\Large
		Conditioning to avoid zero via a class of concave functions for one-dimensional diffusions
	}
\end{center}
\begin{center}
	Kosuke Yamato (University of Osaka)
\end{center}
\begin{center}
	{\small \today}
\end{center}

\begin{abstract}
	For one-dimensional diffusions on the half-line, we study a specific type of conditioning to avoid zero.
	We introduce supermartingales defined via concave functions with respect to the scale function.
	A conditioning is formulated through the exit times of the supermartingale, and its existence is shown.
	We also investigate the absolute continuity relations of the limit laws at time infinity.
\end{abstract}


\section{Introduction}

Let $X = ( (X_{t})_{t \geq 0}, (\cF_{t})_{t \geq 0}, (\bP_{x})_{x \geq 0} )$ be a non-singular one-dimensional diffusion on the half-line killed on hitting zero defined on a measurable space $(\Omega, \cF)$.
In the present paper, we study a specific type of conditioning of $X$ to avoid zero.

Given a non-decreasing sequence of random times $\tau = (\tau_{\lambda})_{\lambda \geq 0}$ diverging to infinity, which we call a \textit{clock}, we can consider the conditional limit distribution of $X$ to avoid zero, if it exists:
\begin{align}
	\bP_{x}[A \mid T_{0} > \tau_{\lambda}] \xrightarrow[\lambda \to \infty]{} \nu_{s,x}(A) \quad \text{for every $s \geq 0$, $x > 0$ and $A \in \cF_{s}$}, \label{conditionalLimit}
\end{align}
where $T_{0}$ is the first hitting time of zero and $\nu_{s,x}$ is the limit distribution.

We are interested in the case where the family of limit distributions forms a Markov process.
If there exist distributions $(\bQ_{x})_{x > 0}$ such that $((X_{t})_{t \geq 0},(\cF_{t})_{t \geq 0},(\bQ_{x})_{x > 0})$ is a Markov process on $(\Omega,\cF_{\infty})$, where $\cF_{\infty} := \sigma\left( \bigcup_{t \geq 0}\cF_{t} \right)$, and the following holds:
\[
\nu_{t,x}(A) = \bQ_{x}[A] \quad (t \geq 0, \ A \in \cF_{t}),
\]
we say that $((X_{t})_{t \geq 0},(\cF_{t})_{t \geq 0},(\bQ_{x})_{x > 0})$ is the \textit{process $X$ conditioned to avoid zero by the clock $\tau$}.

The most famous example of such a conditioning is for Brownian motion.
Let $B = ((B_{t})_{t \geq0},(\cF_{t})_{t \geq 0},(\bP^{B}_{x})_{x \in \bR})$ be the canonical representation of Brownian motion with its natural filtration $(\cF_{t})_{t \geq 0}$.
It is classically known that
\begin{align}
	\lim_{t \to \infty} \bP^{B}_{x}[A \mid T_{0}^{B} > t] = \bP^{\mathrm{Bes}}_{x}[A] \quad (x > 0,\ s \geq 0, \ A \in \cF_{s}), \label{}
\end{align}
where $T_{0}^{B} := \inf\{ t \geq 0 \mid B_{t} = 0 \}$ and $\bP_{x}^{\mathrm{Bes}}$ denotes the underlying probability distribution of three-dimensional Bessel process starting from $x$ (see e.g., \cite{McKeanAvoidZero} for the details).
In this case, the clock is the constant clock $\tau_{\lambda} := \lambda$.

This classical result has been extended to a wider range of processes, possibly in the more general context of $Q$-processes (see e.g., \cite{McKeanAvoidZero,AsymptoticLawsNonAbsorption,YanoYanohTransform,LambertQSD,ChampagnatVillemonais-ExponentialConvergence,ChampagnatVillemonais-Q-process}).
The cases other than the constant clock have also been studied extensively, including the more general problem of the local time penalization (see e.g., \cite{PenalisingBrownianPaths,ChaumontDoneyStayPositive,PantiAvoidZero,ProfetaYanoYano,TakedaYanoPenalizationLevy}).
An interesting feature when considering the various clocks is that the limit distribution may depend on the choice of clocks.

In the present paper, we introduce a new clock and prove the existence of the conditioning to avoid zero for the clock.
Let $X$ be a $\frac{d}{dm}\frac{d^{+}}{ds}$-diffusion, a diffusion whose  scale function is $s$ and speed measure is $dm$.
A function $\rho$ is said to be concave with respect to $s$ when it is right-differentiable with respect to $ds$ and its right-derivative $\frac{d^{+}}{ds}\rho$ is non-increasing.
We introduce a class of supermartingales induced by the concave functions.
We then define a clock by the exit times of the supermartingale.
Theorem \ref{thm:conditionalLimit} establishes the existence of the conditioning by the clock, which is one of the main results of the present paper.

The other main objective is to investigate the absolute continuity relation of the conditional limits on $\cF_{\infty}$.
Let $((X_{t})_{t \geq 0},(\cF_{t})_{t \geq 0},(\bQ^{(1)}_{x})_{x > 0})$ and $((X_{t})_{t \geq 0},(\cF_{t})_{t \geq 0},(\bQ^{(2)}_{x})_{x > 0})$ be two conditionings obtained by the result mentioned above.
As Theorem \ref{thm:conditionalLimit} shows, the conditioning is given by a change of measure of $((X_{t})_{t \geq 0},(\cF_{t})_{t \geq 0},(\bP_{x})_{x > 0})$ by a supermartingale.
If the corresponding supermartingales are actually martingales, the distributions $\bQ^{(1)}_{x}$ and $\bQ^{(2)}_{x}$ are mutually absolutely continuous on $\cF_{t}$ for any $t>0$ and $x>0$.
Specifically, we have
\begin{align}
	\bQ^{(i)}_{x}[A] = \bQ^{(j)}_{x}\left[ \frac{M^{(i)}_{t}}{M^{(j)}_{t}},A \right] \quad (A \in \cF_{t},\ i, j \in \{1,2\}) \nonumber
\end{align}
for the corresponding martingale $(M^{(i)}_{t})_{t \geq 0}$.
The point is that this relation may not be extended to $t = \infty$.
In fact, it is possible that $\bQ^{(1)}_{x}$ and $\bQ^{(1)}_{x}$ are singular on $\cF_{\infty}$.
In Section \ref{section:absoluteContinuity}, we study the conditions under which the conditionings are mutually absolutely continuous or singular on $\cF_{\infty}$.

This study is motivated by \cite{Ratesofdecay}.
We briefly recall their results.
For the details on general facts on one-dimensional diffusions used below, see Section \ref{section:preliminary} and \ref{section:generalResults}.
Let $((X_{t})_{t \geq 0},(\cF_{t})_{t \geq 0}, (\bP_{x})_{x \in [0,\infty)})$ be the canonical representation of a natural-scale $\frac{d}{dm}\frac{d^{+}}{dx}$-diffusion on $[0,\infty)$ with the right-continuous induced filtration $(\cF_{t})_{t \geq 0}$.
We suppose that the boundary $0$ is regular and the boundary $\infty$ is natural in the sense of Feller (see Section \ref{section:classificationOfTheBoundary}).
This implies that $X$ certainly hits zero:
\begin{align}
	\bP_{x}[T_{0} < \infty] = 1 \quad (x > 0). \label{eq08}
\end{align}
Define $u = \psi_{q} \ (q \in \bC)$ as the unique solution of the following integral equation:
\begin{align}
	u(x) = x + q \int_{0}^{x}dy \int_{0}^{y}u(z)dm(z). \label{eq12}
\end{align}
Let $\lambda_{0}$ be the bottom of the $L^{2}(m)$-spectrum of the operator $-\frac{d}{dm}\frac{d^{+}}{dx}$.
It is known that when $\lambda_{0} > 0$, the function $\psi_{-\lambda}(x) \ (\lambda \in [0,\lambda_{0}])$ is positive on $(0,\infty)$ and $\lambda$-invariant, i.e., 
\[
\psi_{-\lambda}(x) = \mathrm{e}^{\lambda t}\bE_{x}[\psi_{-\lambda}(X_{t}),T_{0} > t]
\]
(see \citet[Lemma 4]{Ratesofdecay} and Proposition \ref{prop:monotonicityOfPsi}).
This fact allows us to consider the $h$-transform by $\psi_{-\lambda}$, which is given by a change of measure of $X$ by the martingale
\begin{align}
	M^{[\psi_{-\lambda}]}_{t} := \mathrm{e}^{\lambda t} \frac{\psi_{-\lambda}(X_{t})}{\psi_{-\lambda}(X_{0})}1\{ T_{0} > t \}. \label{eq13}
\end{align}
That is, the $h$-transform $((X_{t})_{t \geq0}, (\cF_{t})_{t \geq 0}, (\bQ_{x})_{x > 0})$ is given by
\begin{align}
	\bQ_{x}[A] = \bE_{x}[M_{t}^{[\psi_{-\lambda}]},A] \quad (A \in \cF_{t}).
\end{align}
In \cite{Ratesofdecay}, they introduced a clock by an exit time of the martingale $M^{[\psi_{-\lambda}]}$ and showed that the conditioning to avoid zero by the clock exists, and the limit process is given by the $h$-transform:

\begin{Thm}[{\citet[Theorem 3]{Ratesofdecay}}] \label{thm:conditionalLimitPrevious}
	Suppose $\lambda_{0} > 0$ and let $\lambda \in (0,\lambda_{0})$.
	Define the clock $(S_{r})_{r > 0}$ by
	\begin{align}
		S_{r} := \inf \{ t > 0 \mid M^{[\psi_{-\lambda}]}_{t} \geq r/\psi_{-\lambda}(X_{0}) \}. \label{eq06}
	\end{align}
	Then we have for every $x > 0$, $s > 0$ and $A \in \cF_{s}$
	\begin{align}
		\lim_{r \to \infty} \bP_{x}[A \mid T_{0} > S_{r} ] = \bE_{x}\left[ M^{[\psi_{-\lambda}]}_{s}, A \right]. \label{eq05}
	\end{align}
\end{Thm}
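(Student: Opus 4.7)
The natural approach is to combine optional sampling for the non-negative continuous martingale $M^{[\psi_{-\lambda}]}$ (which, stopped at $S_r\wedge T_0$, becomes bounded) with the strong Markov property at time $s$. First I would evaluate the normalization $\bP_x[T_0>S_r]$: since $\psi_{-\lambda}(0)=0$ gives $M^{[\psi_{-\lambda}]}_{T_0}=0$, while $M^{[\psi_{-\lambda}]}_{S_r}=r/\psi_{-\lambda}(x)$ on $\{S_r<T_0\}$, optional sampling applied to the bounded martingale $M^{[\psi_{-\lambda}]}_{\cdot\wedge S_r\wedge T_0}$ together with $T_0<\infty$ $\bP_x$-a.s.\ yields
\begin{align}
	\bP_x[T_0>S_r] = \psi_{-\lambda}(x)/r. \nonumber
\end{align}

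For $A\in\cF_s$ I would split $\bP_x[A,T_0>S_r]$ along $\{S_r>s\}\in\cF_s$ and $\{S_r\le s\}$. The key observation is that $S_r=\inf\{t>0\mid \mathrm{e}^{\lambda t}\psi_{-\lambda}(X_t)\geq r\}$ does not depend on the starting point, so on $\{S_r>s\}$ the strong Markov property at $s$ identifies the residual avoidance probability with $\bP_{X_s}[S_{r\mathrm{e}^{-\lambda s}}<T_0]=\mathrm{e}^{\lambda s}\psi_{-\lambda}(X_s)/r$. This gives
\begin{align}
	\bP_x[A,S_r>s,T_0>S_r] = (\psi_{-\lambda}(x)/r)\,\bE_x\!\left[M^{[\psi_{-\lambda}]}_s\mathbf{1}_A\mathbf{1}_{\{S_r>s\}}\right]. \nonumber
\end{align}
A second application of optional sampling to $M^{[\psi_{-\lambda}]}$ at $S_r\wedge T_0\wedge s$ produces the explicit identity
\begin{align}
	\bP_x[T_0>S_r,S_r\le s] = (\psi_{-\lambda}(x)/r)\bigl(1-\bE_x[M^{[\psi_{-\lambda}]}_s,S_r>s]\bigr). \nonumber
\end{align}
Since $\{S_r>s\}\uparrow\Omega$ $\bP_x$-a.s.\ as $r\to\infty$ (the supermartingale $\mathrm{e}^{\lambda\cdot}\psi_{-\lambda}(X_\cdot)$ does not explode on $[0,s]$) and $\bE_x[M^{[\psi_{-\lambda}]}_s]=1$, monotone convergence forces this remainder to be $o(1/r)$.

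Dividing by $\psi_{-\lambda}(x)/r$ and passing to the limit $r\to\infty$ via dominated convergence (with integrable dominating function $M^{[\psi_{-\lambda}]}_s$) then yields
\begin{align}
    \lim_{r\to\infty}\bP_x[A\mid T_0>S_r] = \bE_x[M^{[\psi_{-\lambda}]}_s,A]. \nonumber
\end{align}

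The main obstacle, I expect, is obtaining the sharp $o(1/r)$ control on $\bP_x[T_0>S_r,S_r\le s]$: a soft bound via Doob's maximal inequality applied to the supermartingale $\mathrm{e}^{\lambda t}\psi_{-\lambda}(X_t)$ would only give $O(1/r)$, the same order as the normalizing denominator, leaving a nontrivial bias in the limit. The sharper rate depends on $M^{[\psi_{-\lambda}]}$ being a \emph{true} (not merely local) martingale so that $\bE_x[M^{[\psi_{-\lambda}]}_s]=1$ holds with equality; this is exactly what the assumption $\lambda\in(0,\lambda_0)$, together with the natural character of the boundary $\infty$, is guaranteeing in the background.
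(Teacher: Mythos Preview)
Your argument is correct and shares the same skeleton as the paper's proof of its generalization (Theorem~\ref{thm:conditionalLimit} with $\rho=\psi_{-\lambda}$): the exit--probability identity $\bP_x[T_0>S_r]=\psi_{-\lambda}(x)/r$, the Markov shift $S_r = s + S_{re^{-\lambda s}}\circ\theta_s$ on $\{S_r>s\}$ producing the main term $\bE_x[M^{[\psi_{-\lambda}]}_s 1_A 1_{\{S_r>s\}}]$, and monotone convergence for the limit.

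The genuine difference is how the remainder $\bP_x[T_0>S_r,\,S_r\le s]$ is controlled. The paper (Lemma~\ref{lem:exitTime}, second part) argues by inclusion: from $\{S_r\le s\wedge T_0\}\subset\{T_{\rho^{-1}(re^{-\lambda s})}<T_0\}$ and the scale--function formula $\bP_x[T_y<T_0]=x/y$ together with $\rho(x)/x\to 0$, one gets the $o(1/r)$ bound; in the general framework this step uses the boundedness of $-L\rho/\rho$, i.e.\ membership in $\cC_2$. Your optional sampling of the \emph{bounded} stopped local martingale $M^{[\psi_{-\lambda}]}_{\cdot\wedge S_r\wedge T_0}$ at time $s$ instead yields the exact identity
\[
\bP_x[T_0>S_r,\,S_r\le s] \;=\; \frac{\psi_{-\lambda}(x)}{r}\bigl(1-\bE_x[M^{[\psi_{-\lambda}]}_s,\,S_r>s]\bigr),
\]
which is $o(1/r)$ as soon as $\bE_x[M^{[\psi_{-\lambda}]}_s]=1$. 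This is cleaner, and it is worth noting that in the paper's general setting it would upgrade the second assertion of Theorem~\ref{thm:conditionalLimit} from $\rho\in\cC_2$ to all $\rho\in\cC_1$, since the true--martingale property (Proposition~\ref{prop:martingale}) already holds at that level.
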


One of our main results (Theorem \ref{thm:conditionalLimit}) generalizes this theorem.
We note that the function $\psi_{-\lambda}$ is concave since it satisfies 
\begin{align}
	\frac{d}{dm}\frac{d^{+}}{dx}\psi_{-\lambda} = -\lambda\psi_{-\lambda} \label{EigenFunc}
\end{align}
by \eqref{eq12}.
In Theorem \ref{thm:conditionalLimit}, Theorem \ref{thm:conditionalLimitPrevious} is extended to the case where the function $\psi_{-\lambda}$ is replaced by a more general concave function.
In addition, we generalize the result to the case where the boundary $\infty$ is entrance.
A key tool is the $h$-transform in a wide sense studied in \cite{Maeno}, which we review in Section \ref{section:h-transform}.
It allows us to introduce supermartingales corresponding to $M^{[\psi_{-\lambda}]}$ for more general concave functions.

Finally, we note that this paper also contributes through a systematic treatment of general results on one-dimensional diffusions, presented in Section~\ref{section:generalResults}. 
Although parts of this material are classical, we provide new results where needed and, more importantly, a unified analytic treatment based on Feller’s canonical representation of the infinitesimal generator.
We expect that this structured presentation, in addition to its preparatory role, will be of independent value for future work.

\subsection*{Outline of the paper}

In Section \ref{section:preliminary}, we recall several fundamental concepts on one-dimensional diffusions, especially on the characteristic triplet and a representation of the infinitesimal generators established in \cite{Ito-McKean}.
In Section \ref{section:generalResults}, we recall and prepare general results on positive eigenfunctions, the spectra of infinitesimal generators, and exit times.
In Section \ref{section:concaveFunctions}, we introduce a class of concave functions playing an essential role and introduce supermartingales via them.
Section \ref{section:conditinalLimit} is one of the main parts.
We prove the conditional limit theorem which extends Theorem \ref{thm:conditionalLimitPrevious}.
Section \ref{section:absoluteContinuity} is the other main part.
We consider absolute continuity relations of conditional limits on $\cF_{\infty}$ and present sufficient conditions for absolute continuity and singularity. 
We observe examples in Section \ref{section:examples}.

\subsection*{Acknowledgement}

The author is sincerely grateful to Professor Servet Mart\'inez.
The theme of the present paper is based on his suggestion during the author's visit to the Center for Mathematical Modeling in Santiago.
He also provided valuable comments on a draft of the present paper.
The author would also like to express his gratitude to Professor Kouji Yano for reading an early draft and offering many helpful suggestions.
The author is supported by JSPS KAKENHI grant no.\ JP23KJ0236 and JP24K22834.

\section{Preliminary} \label{section:preliminary}

\subsection{Characteristic triplet of one-dimensional diffusions} \label{section:ito-McKean}

As established in \cite{Ito-McKean}, every non-singular one-dimensional diffusion is characterized by a triplet consisting of the scale function, the speed measure, and the killing measure.
Since we work on the framework presented in \cite[Chapter 4,5]{Ito-McKean}, we briefly summarize the theory and necessary results here.
For details, see \cite{Ito-McKean}.
See also \cite{MR3161402} for a treatment from the Dirichlet form theory.
The case without a killing measure is discussed in, e.g., \cite{Feller:thegeneral}, \cite{Ito_essentials}, \cite[Chapter 33]{Kallenberg-third}, and \citet[Chapter V.7]{RogersWilliams2}.

\begin{Rem}
Before getting into the details, we note that, rigorously speaking, the triplet characterization has a slight exception.
In the framework of \cite{Ito-McKean}, the following three behaviors of the process at a boundary are distinguished:
(i) instantaneous killing; 
(ii) absorption with permanent stay; 
(iii) absorption followed by killing after a positive exponential holding time. 
In the third case, an additional parameter is required for the holding time rate.
However, the distinction among these three cases is irrelevant to our main discussion.
Therefore, we identify (ii) and (iii) with (i).
Consequently, the triplet gives a complete characterization.
\end{Rem}

We recall how the triplet arises from a given diffusion.
Let $X = ((X_{t})_{t \geq 0}, (\cF_{t})_{t \geq 0},(\bP_{x})_{x \in I})$ be a non-singular one-dimensional diffusion on $I \cup \{ \partial\}$, where $I$ is an interval with endpoints $\ell$ and $r \ (-\infty \leq \ell < r \leq \infty)$ and $\partial$ is an isolated point which we call the cemetery.
We always consider that any function $f$ on $I \cup \{\partial \}$ satisfies $f(\partial) = 0$.
When $\ell$ or $r$ is entrance in the sense of Feller (see Section~\ref{section:classificationOfTheBoundary}), and if it is not yet contained in $I$, we adjoin it to $I$.
As is classically known, the process $X$ can be extended to a diffusion on the resulting interval $I'$ (see \cite[Problem 3.6.3]{Ito-McKean}).
For notational simplicity, we consider that $X$ is given as a process on $I'$ from the outset, and we write it $I$ instead of $I'$.

Under this modification, the transition semigroup always satisfies the Feller property in the following sense: for every $t > 0$ and $f \in C_{\infty}(I)$ the function
\[
I \ni x \mapsto \mathbb{E}_{x}[f(X_{t})] 
\]
again belongs to $C_{\infty}(I)$ (see \cite[Chapter 4.7]{Ito-McKean}), where $C_{\infty}(I)$ denotes the set of continuous functions on $I$ vanishing at infinity.
By the Hille-Yosida theorem, the infinitesimal generator $L$ on $C_{\infty}(I)$ defined by
\[
Lf(x) := \lim_{t \to 0}\frac{\bE_{x}[f(X_{t})] - f(x)}{t} 
\]
gives a densely defined closed operator on $C_{\infty}(I)$.
We denote the domain of $L$ by $\cD(L)$.

In \cite[Chapter 3,4,5]{Ito-McKean}, it has been established that there exist Radon measures $m$ with full support and $k$, and a strictly increasing and continuous function $s$ on $I$,  and the generator $L$ satisfies for every $f \in \cD(L)$
\begin{align}
\int_{a}^{b}Lf(x)dm(x) = \frac{d^{+}f}{ds}(b) -  \frac{d^{+}f}{ds}(a) - \int_{a}^{b}f(x)dk(x) \quad (a,b \in (\ell,r)). \label{eq65}
\end{align}
This is symbolically represented as
\[
Lf(x) = \frac{\frac{d^{+}f}{ds}(dx) - f(x)dk(x)}{dm(x)} \quad (x \in (\ell,r))
\]
and can be understood that the infinitesimal generator of $X$ is represented by a second-order differential operator.
If an endpoint $\Delta \in \{\ell,r \} \cap I$ is present, we also have equations written by $m,s,k$ for $Lf(\Delta)$ to satisfy, though we omit the detail here (see \cite[Chapter 4.7]{Ito-McKean}).
In summary, the generator $L$, and consequently the process $X$, is completely characterized by the \emph{speed measure} $m$, the \emph{scale function} $s$, and the \emph{killing measure} $k$.
We sometimes write $L$ as $L^{(m,s,k)}$.

We call a diffusion associated with a triplet $(m,s,k)$ an $(m,s,k)$-diffusion.
In particular, if $k=0$, we sometimes call it a $\frac{d}{dm}\frac{d^{+}}{ds}$-diffusion.
Moreover, when the scale function is identity (i.e., $s(x) = x$), we say that the corresponding diffusion is natural-scale.

\subsection{Feller's classification of the boundary} \label{section:classificationOfTheBoundary}

Let $L$ be an infinitesimal generator of a diffusion, and let $(m,s,k)$ be the corresponding triplet.
Fix $c \in (\ell,r)$ arbitrarily and set $dj(x) := dm(x) + dk(x)$.
For the boundary point $\Delta = \ell$ or $r$, define
\begin{align}
	I(\Delta) := \int_{\Delta}^{c}ds(x)\int_{\Delta}^{x}dj(y), \quad J(\Delta) := \int_{\Delta}^{c}dj(x)\int_{\Delta}^{x}ds(y). \label{}
\end{align}
Throughout the present paper, we interpret $\int_{a}^{b} := \int_{(a,b]}$ and $\int_{b}^{a} := -\int_{a}^{b}$ for $a < b$.
According to Feller's classification, the boundary $\Delta$ is called 
\begin{align}
	\begin{cases}
		\text{regular} &\text{when}\  I(\Delta) < \infty \ \text{and} \  J(\Delta) < \infty \\
		\text{exit} &\text{when} \ I(\Delta) = \infty \ \text{and} \  J(\Delta) < \infty \\
		\text{entrance} &\text{when} \ I(\Delta) < \infty \ \text{and} \  J(\Delta) = \infty \\
		\text{natural} &\text{when} \ I(\Delta) = \infty \ \text{and} \  J(\Delta) = \infty
	\end{cases}
	.
	\label{}
\end{align}
We also say that $\Delta$ is regular for $L$, or the like.

It is well known that this classification is closely related to the behavior of $(m,s,k)$-diffusion (see e.g., \cite[Chapter V.7]{RogersWilliams2}, \citet[Chapter 5.16]{Ito_essentials}, and \citet[Theorem 33.12]{Kallenberg-third} for the case of $k=0$).

Now we fix our setting.
We denote by $\Omega$ the space of functions $\omega$ from $[0,\infty)$ to $(0,\infty) \cup \{\partial
\}$ such that the function $[0,\zeta(\omega)) \ni t \mapsto \omega(t)$ is continuous and $\omega(t) = \partial \ (t \geq \zeta (\omega))$ for $\zeta(\omega) := \inf \{ t \geq 0 \mid \omega (t) = \partial \}$.
We also denote by $\cF_{t}$ the right-continuous induced filtration, i.e., $\cF_{t} := \bigcap_{s > t} \sigma (X_{u}, u \leq s)$.
Set $\cF_{\infty} := \sigma \left( \bigcup_{t > 0} \cF_{t} \right)$.
For simplicity, we always work on the measurable space $(\Omega,\cF_{\infty})$.
In the remainder of this paper, we fix a non-singular and natural-scale $\frac{d}{dm}\frac{d^{+}}{dx}$-diffusion
\[
X = ((X_{t})_{t \geq 0},(\bP_{x})_{x \geq 0},(\cF_{t})_{t \geq 0})
\]
on the half-line $(0,\infty)$ only killed at $T_{0} := \lim_{x \to 0}T_{x}$, where $T_{x} := \inf \{ t > 0 \mid X_{t} = x \}$, i.e., $\zeta = T_{0}$.
We denote the transition semigroup by $p_{t}$; $p_{t}f(x) := \bE_{x}[f(X_{t})]$.
Since we are interested in conditionings to avoid zero, we assume the boundary $0$ is regular or exit in the sense of Feller.
Furthermore, we assume that the boundary $\infty$ is entrance or natural, which does not significantly restrict our problem (see Remark \ref{rem:setting}).

Note that since $X$ is natural-scale and the state space $(0,\infty)$ is unbounded, the process $X$ is certainly absorbed in zero: $\bP_{x}[T_{0} < \infty] = 1 \ (x > 0)$ (see e.g., \citet[Theorem 33.15]{Kallenberg-third}).

\begin{Rem} \label{rem:setting}
In our setting, the state space is assumed to be unbounded above and the boundary $\infty$ to be entrance or natural.
For the main subject of this paper, this restriction essentially excludes no meaningful cases, since all other cases are either trivial or can be discussed similarly to the case addressed in this paper.

Note that when the state space is unbounded above, the boundary $\infty$ is automatically either entrance or natural.
Hence, it suffices to verify that when the state space is a bounded interval $[0,\ell)$ or $[0,\ell]$ with $\ell \in (0,\infty)$, nothing interesting occurs.

When $\ell$ is regular absorbing, regular elastic, or exit, the process $X$ is absorbed at $\ell$ with positive probability, and hence $\mathbb{P}_{x}[T_{0} = \infty] > 0$.
In this case, for any clock $\tau_{\lambda}$, we obviously have the following conditional limit to avoid zero:
\[
\lim_{\lambda \to \infty} \mathbb{P}_{x}[A \mid T_{0} > \tau_{\lambda}] 
= \mathbb{P}_{x}[A \mid T_{0} = \infty], 
\quad (t \geq 0,\ A \in \cF_{t}).
\]
When $\ell$ is natural, then $X_{t} \to \ell$ as $t \to \infty$ with positive probability (see, e.g., \cite[Theorem~33.15]{Kallenberg-third}), and the situation is trivial for the same reason.
The boundary $\ell$ cannot be entrance when $\ell < \infty$.

Finally, the case where $\ell$ is regular reflecting can be handled by the same arguments with the unbounded interval case we mainly consider.
We therefore omit it for brevity.
\end{Rem}

\subsection{Local generator}

As described in Section \ref{section:ito-McKean}, the transition semigroup of $X$ satisfies the Feller property, i.e., $p_{t}$ maps $C_{\infty}(I)$ to itself.
From \eqref{eq65}, the generator $L$ of $X$ is represented by $\frac{d}{dm}\frac{d^{+}}{dx}$ on $(0,\infty)$:
\[
Lf(x) = \frac{d}{dm}\frac{d^{+}}{dx}f(x) \quad (f \in \cD(L), \ x \in (0,\infty)),
\]
where we interpret $\frac{d}{dm}$ as the Radon-Nikodym derivation.

It is convenient to consider the differential operator independently.
Let $\cD(\cL)$ be the set of continuous functions $u:(0,\infty) \to \bR$ such that there exists a continuous function $v:(0,\infty) \to \bR$ and
\begin{align}
	\frac{d}{dm}\frac{d^{+}}{dx}u(x) = v(x) \quad (x \in (0,\infty)), \label{eq43}
\end{align}
and we define $\cL u := v$.
We call $\cL$ the \textit{local generator} on $(0,\infty)$ (see also \cite[Chapter 5.3]{Ito_essentials}).

\subsection{Transition and resolvent density}

We recall basic facts on the transition and resolvent density of one-dimensional diffusions.
See e.g., \cite{McKean:elementary} and \cite[Chapter 4.11]{Ito-McKean} for details.

The process $X$ has a symmetric transition density with several desirable properties.
There is a continuous function $p(t,x,y)$ $(t > 0, x,y \in (0,\infty))$ such that $p(t,x,y) = p(t,y,x)$ and
\[
p_{t}f(x) = \int_{0}^{\infty}p(t,x,y)f(y)dm(y) \quad (t > 0, \ x \in (0,\infty))
\]
for every non-negative measurable function $f$.
We also have suitable boundary asymptotics of $p(t,x,y)$ depending on the boundary classification though we omit here.

The existence of the transition density implies that of the resolvent density:
\[
R^{(q)}f(x) := \int_{0}^{\infty}\mathrm{e}^{-qt}p_{t}f(x)dt = \int_{0}^{\infty}r^{(q)}(x,y)f(y)dm(y)
\]
for 
\[
r^{(q)}(x,y) := \int_{0}^{\infty}\mathrm{e}^{-qt}p(t,x,y)dt \ (q \geq 0, \ x,y \in (0,\infty)).
\]

By the classical theory of ordinary differential equations, the density $r^{(q)}$ can be expressed in terms of positive increasing and decreasing eigenfunctions of $\cL$.
Let us recall $\psi_{q}$ introduced in \eqref{eq12}.
From the definition of $\cL$, it satisfies $\cL \psi_{q} = q\psi_{q}$.
In this sense, we say that $\psi_{q}$ is a $q$-eigenfunction of $\cL$.
For $q \geq 0$, it is positive and increasing.
More strongly, we can easily check the following (see e.g., \cite[Chapter 5.12]{Ito_essentials}):
\[
x \leq \psi_{q}(x) \leq x \exp \left( q\int_{0}^{x}ydm(y) \right) \quad (x > 0)
\]
and
\[
1 \leq \psi_{q}^{+}(x) \leq  \exp \left( q\int_{0}^{x}ydm(y) \right) \quad (x > 0).
\]
Define
\begin{align}
	g_{q}(x) := \psi_{q}(x)\int_{x}^{\infty}\frac{dy}{\psi_{q}(y)^{2}} \ (q \geq 0, \ x \in (0,\infty)). \label{g_q}
\end{align}
The function $u = g_{q}$ is a positive decreasing solution of $\cL$.
More specifically, we have the following properties.
\begin{Prop} \label{prop:g_qProperty}
	For $q \geq 0$, the integral in the RHS of \eqref{g_q} is finite, and the following hold:
	\begin{enumerate}
		\item $\cL g_{q} = q g_{q}$.
		\item $g_{q}(x) > 0$ and $g_{q}^{+}(x) < 0 \ (x > 0)$.
		\item $\lim_{x \to 0} g_{q}(x) = 1$ and $ \lim_{x \to \infty}g_{q}^{+}(x) = 0$.
		\item The Wronskian of $g_{q}$ and $\psi_{q}$ is one:
		\[
		g_{q}(x)\psi_{q}^{+}(x) - g_{q}^{+}(x)\psi_{q}(x) = 1 \ (x \in (0,\infty)).
		\]
	\end{enumerate}
\end{Prop}

Although these properties are classically known, we present a proof for completeness.

\begin{proof}[Proof of Proposition \ref{prop:g_qProperty}]
	The integral $\int_{x}^{\infty}dy/\psi_{q}(y)^{2}$ is finite by the inequality $\psi_{q}(x) \geq x$.
	The positivity $g_{q}(x) > 0 \ (x > 0)$ is obvious.
	Since $\psi_{q}$ is convex, we have
	\begin{align}
		g^{+}_{q}(x) = \psi_{q}^{+}(x) \int_{x}^{\infty}\frac{dy}{\psi_{q}(y)^{2}} - \frac{1}{\psi_{q}(x)} < \int_{x}^{\infty}\frac{\psi_{q}^{+}(y)}{\psi_{q}(y)^{2}}dy - \frac{1}{\psi_{q}(x)} = 0. \label{eq15}
	\end{align} 
	Thus, (ii) is shown.
	Since $d\psi_{q}^{+} = q \psi_{q}dm$, it follows
	\begin{align}
		d(g^{+}_{q}(x)) &= d\left( \psi_{q}^{+}(x)\int_{x}^{\infty}\frac{dy}{\psi_{q}(y)^{2}} - \frac{1}{\psi_{q}(x)} \right) \label{} \\
		&= q g_{q}(x) dm - \frac{\psi_{q}^{+}(x-)}{\psi_{q}^{2}(x)}dx + \frac{\psi_{q}^{+}(x)}{\psi_{q}^{2}(x)}dx \label{} \\
		&= q g_{q} dm, \label{}
	\end{align}
	for $\psi_{q}^{+}(x-) := \lim_{y \up x}\psi_{q}^{+}(y)$.
	Note that $\psi_{q}^{+}(x) = \psi_{q}^{+}(x-)$ except at points $x$ with $m\{x\} > 0$. 
	Since such points are at most countable, they form a null set for $dx$ and may be ignored.
	Thus, we obtain $\cL g_{q} = q g_{q}$.
	We show (iii).
	The equality $\lim_{x \to 0}g_{q}(x) = 1$ follows from $\lim_{x \to 0}\psi_{q}(x)/x = 1$.
	From \eqref{eq15}, we have $|g_{q}^{+}(x)| \leq 1/ \psi_{q}(x) \to 0 \ (x \to \infty)$.
	Property (iv) directly follows from the first equality of \eqref{eq15}.
	The proof is complete.
\end{proof}

The resolvent density can be written as follows (see e.g., \cite[Chapter 5.14, 5.15]{Ito_essentials}):
\begin{align}
r^{(q)}(x,y) := 
\begin{cases}
	\psi_{q}(x)g_{q}(y) & (x \leq y) \\
	r^{(q)}(y,x) & (x > y)
\end{cases}
. \label{defOfResolventDensity}
\end{align}

Since the domain $\cD(L)$ satisfies $\cD(L) = R^{(q)}(C_{\infty}(I)) \ (q > 0)$, we can easily identify it through the boundary behavior of $\psi_{q}$ and $g_{q}$ (see e.g., \cite[p.136]{Ito-McKean} and \cite[Chapter 5]{Ito_essentials}).
When $\infty$ is entrance,
\begin{align}
  \cD(L) =
    \left\{ u \in \cD(\cL) \cap C_{\infty}(0,\infty] \,\middle|\,
	\lim_{x \to 0}\cL u(x) = 0, \ 
	\lim_{x \to \infty} u^{+}(x) = 0, \ 
	\lim_{x \to \infty} \cL u(x) \in \bR
	\right\},
\end{align}
and when $\infty$ is natural.
\begin{align}
	\cD(L) = 
	\left\{ u \in \cD(\cL) \cap C_{\infty}(0,\infty) \,\middle|\,
    \lim_{x \to 0} \cL u(x) = 0, \ 
    \lim_{x \to \infty}\cL u(x) = 0
    \right\}
	.
\end{align}

\subsection{Generators on $L^{p}$-spaces}

For $p \in [1,\infty)$, let $L^{p}(m)$ be the set of functions $f$ on $(0,\infty)$ with $||f||_{p} := \int_{0}^{\infty}|f|^{p}dm < \infty$, modulo the ideal of null functions.
Let $L^{\infty}(m)$ be the set of essentially bounded functions $f$ on $(0,\infty)$ with respect to $dm$, modulo the ideal of null functions, and we write $||f||_{\infty} := \mathrm{ess sup}_{x}|f(x)|$.
Hereafter, we do not distinguish notationally between an element in $L^{p}(m)$ and its version.

As is classically known, the transition semigroup is symmetric on $L^{2}(m)$:
\[
\int_{0}^{\infty}p_{t}f(x)g(x)dm(x) = \int_{0}^{\infty}f(x)p_{t}g(x)dm(x) \quad (f,g \in L^{2}(m)),
\]
which can also be confirmed through the symmetry of the transition density $p(t,x,y)$.
This implies that the transition semigroup $p_{t}$ can be extended to a positive contraction semigroup $T_{t}$ on $L^{p}(m) \ (p \in [1,\infty])$, and it is strongly continuous for $p \in [1,\infty)$ (see e.g., \cite[Theorem 1.4.1]{DaviesHeatKernel}). For $p \in [1,\infty)$, we denote the infinitesimal generator by $L_{p}$ and the domain by $\cD(L_{p})$.
For $p = \infty$, we define $L_{\infty}$ as the adjoint operator of $L_{1}$.
We denote by $\sigma(L_{p})$ the spectrum of $L_{p}$.

By similar considerations as for the domain $\cD(L)$, the domain $\cD(L_{p}) \ (p \in [1,\infty))$ can be identified.
For measures $\mu$ and $\nu$, we write $\mu \ll \nu$ when $\mu$ is absolutely continuous with respect to $\nu$.
Set
\[
\cD(\cL') := \{ f:(0,\infty) \to \bR \mid df \ll dx,\  df' \ll dm \}
\]
and define $\cL' f = \frac{d}{dm}f'$.
We see that when $\infty$ is entrance,
\begin{align}
\cD(L_{p})
= \left\{ f \in L^{p}(m)\ \middle|\ 
\begin{array}{l}
f \in \cD(\cL'),\\
\int |\cL'f|^{p} dm < \infty,\\
\lim_{x \to 0}\cL' f(x)=0,\\
\lim_{x \to \infty} f'(x)=0
\end{array}
\right\}, \label{LpDomainEntrance}
\end{align}
and when $\infty$ is natural,
\begin{align}
\cD(L_{p})
= \left\{ f \in L^{p}(m)\ \middle|\ 
\begin{array}{l}
f \in \cD(\cL'),\\
\int |\cL'f|^{p} dm < \infty,\\
\lim_{x \to 0}\cL' f(x)=0
\end{array}
\right\}. \label{LpDomainNatural}
\end{align}

In our discussion, the bottom of the spectrum of $-L_{2}$
\[
\lambda_{0} := -\sup \sigma(L_{2})
\]
is particularly important.
A simple characterization of the positivity of $\lambda_{0}$ is known.

\begin{Thm}[{\citet[Appendix, Theorem 3 (ii)]{KotaniWatanabe}}]
	We have $\lambda_{0} > 0$ if and only if
	\begin{align}
	m(1,\infty) < \infty \quad \text{and} \quad \limsup_{x \to \infty} x m(x,\infty) < \infty. \label{eq46}
	\end{align}
\end{Thm}

\begin{Rem}
	When $\infty$ is entrance, the condition \eqref{eq46} always holds.
\end{Rem}

\section{Some general results on one-dimensional diffusions} \label{section:generalResults}

In this section, we present several general results on one-dimensional diffusions required for our purposes. 
While parts of them are classical, we also include some new and generalized results.

\subsection{Positive eigenfunctions} \label{section:QSD}

We recall basic results on positive eigenfunctions of the local generator.
Its existence is closely related to the positivity of $\lambda_{0}$.

The following proposition is fundamental for the monotonicity and positivity of eigenfunctions, which is a consequence of the classical theory of second-order ordinary differential operators.

\begin{Prop} \label{prop:monotonicityOfPsi}
	The following holds:
	\begin{align}
		\lambda_{0} = \sup\{ \lambda \in \bR \mid \psi_{-\lambda}(x) > 0 \ (x > 0) \} 
		= \sup\{ \lambda \in \bR \mid \psi_{-\lambda}^{+}(x) > 0 \ (x > 0) \}. \label{eq47}
	\end{align}
	More precisely, we have
	\begin{align}
	0 < \psi_{q'}(x) < \psi_{q}(x) \quad \text{and} \quad 0 < \psi^{+}_{q'}(x) < \psi^{+}_{q}(x) \quad (x > 0,\ -\lambda_{0} \leq q' < q). \label{eq48}
	\end{align}
	In particular, for $q \in \bR$ it follows that
	\begin{align}
	\psi_{q}(x) > 0 \ \text{for every $x > 0$} \quad \text{if and only if} \quad q \geq -\lambda_{0}, \label{eq49}
	\end{align}
	and the function
	\begin{align}
		[-\lambda_{0},\infty) \times (0,\infty) \ni (q,x) \longmapsto \psi_{q}(x) \label{eq50}
	\end{align}
	is increasing in $q$ and $x$.
\end{Prop}

\begin{Rem}
	Parts of this proposition is shown in \cite[Lemma 1]{Ratesofdecay} when $\infty$ is entrance.
\end{Rem}

\begin{proof}[Proof of Proposition \ref{prop:monotonicityOfPsi}]
	We first check that \eqref{eq48} follows from \eqref{eq47}, although the derivation is no more than the classical Sturm's comparison argument (see e.g., \cite[Chapter 8]{Coddington}).

	Assume \eqref{eq47} holds.
	Let $q \in (-\lambda_{0},\infty)$.
	Take $\lambda \in (-q,\lambda_{0}]$ such that $\psi_{-\lambda}(x) > 0 \ (x > 0)$.
	We have
	\begin{align}
		\frac{d^{+}}{dx}\frac{\psi_{q}(x)}{\psi_{-\lambda}(x)} 
		&= \frac{\psi_{q}^{+}(x)\psi_{-\lambda}(x) - \psi_{q}(x)\psi_{-\lambda}^{+}(x)}{\psi_{-\lambda}(x)^{2}} \label{} \\
		&= \frac{(\lambda + q) \int_{0}^{x}\psi_{q}(y)\psi_{-\lambda}(y)dm(y)}{\psi_{-\lambda}(x)^{2}}. \label{eq51}
	\end{align}
	We prove by contradiction that $\psi_{q}(x) > 0 \ (x > 0)$.
	Suppose there exists $x_{0} > 0$ such that $\psi_{q}(x_{0}) = 0$.
	Since $\psi^{+}_{q}(0) > 0$, we may assume $x_{0} = \min \{ x > 0 \mid \psi_{q}(x) = 0\} > 0$.
	From \eqref{eq51} it follows for every $c \in (0,x_{0})$
	\[
	0 > - \frac{\psi_{q}(c)}{\psi_{-\lambda}(c)} = \int_{c}^{x_{0}}\frac{d^{+}}{dx}\frac{\psi_{q}(x)}{\psi_{-\lambda}(x)}dx > 0,
	\]
	and this is contradiction.
	Thus, we have $\psi_{q}(x) > 0 \ (x > 0)$.
	Integrating \eqref{eq51} from $0$ to $x$, we obtain $\psi_{q}(x) > \psi_{-\lambda}(x) \ (x > 0)$ since $\lim_{x \to 0}\psi_{q}(x) / \psi_{-\lambda}(x) = 1$.
	This shows the first relation in \eqref{eq48} for $-\lambda_{0} < q' < q$.
	A similar argument for $\psi_{q}^{+}$ instead of $\psi_{q}$ gives the second relation in \eqref{eq48} for $-\lambda_{0} < q' < q$.
	We check \eqref{eq48} for $q' = -\lambda_{0}$.
	From the continuity of $\bC \ni q \mapsto \psi_{q}(x)$, we have
	\[
	\psi_{-\lambda_{0}}(x) \geq 0 \quad \text{and} \quad \psi^{+}_{-\lambda_{0}}(x) \geq 0 \quad \text{for every $x > 0$},
	\]
	which obviously implies $\psi_{-\lambda_{0}}(x) > 0 \ (x > 0)$.
	Thus, \eqref{eq47} entails \eqref{eq48}.

	To prove \eqref{eq47}, we first recall a consequence of the classical Sturm-Liouville theory.
	Fix $r > 0$.
	Define $L^{2}((0,r),dm)$ as the set of functions $g$ on $(0,r)$ with $\int_{0}^{r}|g|^{2}dm < \infty$, modulo null functions.
	On this space, let $L^{D,r}$ and $L^{N,r}$ denote the non-positive definite self-adjoint operators $\frac{d}{dm^{(r)}}\frac{d^{+}}{dx}$ on $L^{2}((0,r),dm)$, where $dm^{(r)} := dm|_{(0,r)}$, equipped with the Dirichlet and Neumann boundary condition at $r$, respectively.
	For both operators, we impose the Dirichlet boundary condition at $0$.
	Set 
	\[
	\lambda_{0}^{D,r} := -\sup \sigma(L^{D,r}) \quad \text{and} \quad \lambda_{0}^{N,r} := -\sup \sigma(L^{N,r}).
	\]
	It is classically known that 
	\[
	\lambda_{0}^{D,r} = \sup \{ \lambda \geq 0 \mid \psi_{-\lambda}(x) > 0 \ (x \in (0,r)) \}
	\]
	and
	\[
	\lambda_{0}^{N,r} = \sup \{ \lambda \geq 0 \mid \psi^{+}_{-\lambda}(x) > 0 \ (x \in (0,r)) \},
	\]
	which implies $\lambda_{0}^{N,r} < \lambda_{0}^{D,r}$.
	It is elementary to check that 
	\[
	\lambda_{0}^{D} := \lim_{r \to \infty} \lambda_{0}^{D,r} = \sup \{ \lambda \geq 0 \mid \psi_{-\lambda}(x) > 0 \ (x \in (0,\infty)) \}
	\]
	and
	\[
	\lambda_{0}^{N} := \lim_{r \to \infty}\lambda_{0}^{N,r} = \sup \{ \lambda \geq 0 \mid \psi^{+}_{-\lambda}(x) > 0 \ (x \in (0,\infty)) \}.
	\]
	To complete the proof, it suffices to show $\lambda_{0}^{D} = \lambda_{0}^{N} = \lambda_{0}$.

	Recall the variational characterization of $\lambda_{0}$:
	\[
	\lambda_{0} = \inf_{f \in \cD(L_{2})}\frac{\langle -L_{2}f,f \rangle}{||f||_{2}^{2}},
	\]
	where $\langle f, g \rangle := \int_{0}^{\infty}f(x)g(x)dm(x)$ and $||f||_{2} := \sqrt{\langle f, f \rangle}$.
	We can easily verify that any element $f \in \cD(L^{D,r})$ can be regarded as that in $\cD(L_{2})$ by extending it as $f(x+r) := f(r) \ (x > 0)$, regardless of the boundary classification of $\infty$, which means $\cD(L^{D,r}) \subset \cD(L_{2})$.
	Then, again from the variational formula, we see that $\lambda_{0} \leq \lambda_{0}^{D,r}$, i.e., $\lambda_{0} \leq \lambda_{0}^{D}$.
	Let $R^{D,r}_{1}$ be the $1$-resolvent of $L^{D,r}$.
	We can routinely show that $\bigcup_{r > 0}R^{D,r}_{1}(L^{2}((0,r),dm))$ is a core of $L_{2}$, which readily implies $\lambda_{0}^{D} \leq \lambda_{0}$ by the variational formula.
	The proof is complete.
\end{proof}

Proposition \ref{prop:monotonicityOfPsi} shows that for $\lambda \in (0,\lambda_{0}]$, the function $\psi_{-\lambda}(x) \ (x > 0)$ is positive, increasing, and concave.
The behavior of these functions at $x = \infty$ differs depending on the boundary classification and whether $\lambda = \lambda_{0}$ or not.  

\begin{Prop} \label{prop2-8}
	Suppose $\lambda_{0} > 0$.
	The following hold:
	\begin{enumerate}
		\item When the boundary $\infty$ is entrance, we have for $\lambda \in (0,\lambda_{0})$
		\[
		\lim_{x \to \infty} \psi_{-\lambda}(x) = \infty, \quad \lim_{x \to \infty} \psi^{+}_{-\lambda}(x) \in (0,\infty), \quad \text{and} \quad \lambda\int_{0}^{\infty}\psi_{-\lambda}(x)dm(x) < 1,
		\]
		and
		\[
		\sup_{x > 0} \psi_{-\lambda_{0}}(x) < \infty, \quad \lim_{x \to \infty} \psi^{+}_{-\lambda_{0}}(x) = 0, \quad \text{and} \quad \lambda\int_{0}^{\infty}\psi_{-\lambda_{0}}(x)dm(x) = 1.
		\]
		\item When the boundary $\infty$ is natural, we have for $\lambda \in (0,\lambda_{0}]$
		\[
		\lim_{x \to \infty} \psi_{-\lambda}(x) = \infty, \quad \lim_{x \to \infty} \psi^{+}_{-\lambda}(x) = 0, \quad \text{and} \quad \lambda\int_{0}^{\infty}\psi_{-\lambda}(x)dm(x) = 1.
		\]
	\end{enumerate}
	In addition, when $\lim_{x \to \infty} \psi^{+}_{-\lambda}(x) = 0$ for $\lambda \in (0,\lambda_{0}]$, we have the following representation: 
	\begin{align}
		\psi_{-\lambda}(x) = \lambda \int_{0}^{x}dy\int_{y}^{\infty}\psi_{-\lambda}(z)dm(z) \quad (x > 0). \label{eq21}
	\end{align}
\end{Prop}

\begin{proof}
	Let $\lambda \in (0,\lambda_{0}]$.
	From Proposition \ref{prop:monotonicityOfPsi}, it follows $\psi_{-\lambda}(x), \psi_{-\lambda}^{+}(x) > 0 \ (x > 0)$.
	Since
	\[
	\psi_{-\lambda}^{+}(x) = 1 - \lambda \int_{0}^{x}\psi_{-\lambda}(y)dm(y),
	\]
	we have  $\int_{0}^{\infty}\psi_{-\lambda}(y)dm(y) < \infty$ and $\psi_{-\lambda}^{+}(\infty) := \lim_{x \to \infty}\psi_{-\lambda}^{+}(x) \in [0,1)$.
	Suppose $\psi_{-\lambda}^{+}(\infty) = 0$.
	It follows that $\lambda\int_{0}^{\infty}\psi_{-\lambda}(x)dm(x) = 1$ and
	\begin{align}
		\psi_{-\lambda}(x) &= x - \lambda \int_{0}^{x}dy\int_{0}^{y}\psi_{-\lambda}(z)dm(z) \nonumber \\
		&= \lambda \int_{0}^{x}dy\int_{0}^{\infty}\psi_{-\lambda}(z)dm(z)  - \lambda \int_{0}^{x}dy\int_{0}^{y}\psi_{-\lambda}(z)dm(z) \nonumber \\
		&= \lambda \int_{0}^{x}dy\int_{y}^{\infty}\psi_{-\lambda}(z)dm(z), \nonumber
	\end{align}
	and \eqref{eq21} holds.

	Suppose $\infty$ is natural.
	If $\psi_{-\lambda}^{+}(\infty) > 0$, it follows $\psi_{-\lambda}^{+}(x) \geq \psi_{-\lambda}^{+}(\infty)$, and $\psi_{-\lambda}(x) \geq \psi^{+}_{-\lambda}(\infty)x \ (x > 0)$.
	This contradicts to $\int_{0}^{\infty}\psi_{-\lambda}(y)dm(y) < \infty$ since $\int_{1}^{\infty}x dm(x) = \infty$.
	Thus, we have $\psi_{-\lambda}^{+}(\infty) = 0$ and $\lambda\int_{0}^{\infty}\psi_{-\lambda_{0}}(x)dm(x) = 1$.
	We see that $\lim_{x \to \infty}\psi_{-\lambda}(x) = \infty$ by
	\begin{align}
		\lim_{x \to \infty}\psi_{-\lambda}(x) &= \lambda\int_{0}^{\infty}dy\int_{y}^{\infty}\psi_{-\lambda}(z)dm(z) \label{} \\
		&= \lambda \int_{0}^{\infty}z\psi_{-\lambda}(z)dm(z) \label{} \\
		&\geq \psi_{-\lambda}(1) \int_{1}^{\infty}zdm(z) = \infty. \label{}
	\end{align}

	Suppose $\infty$ is entrance.
	It is classically known that $R^{(q)} \ (q > 0)$ is a trace-class operator on $L^{2}(m)$, and thus the spectrum consists of eigenvalues.
	Since $-\lambda_{0} \in \sigma(L_{2})$, we have $\psi_{-\lambda_{0}}^{+}(\infty) = 0$ from \eqref{LpDomainEntrance}.
	From the assumption of the entrance boundary, we can easily confirm that the function
	\[
	q \mapsto 1 - q \int_{0}^{\infty}\psi_{-q}(y)dm(y) \ (= \psi_{-q}^{+}(\infty))
	\]
	is entire.
	Since the zeros of the analytic function $\psi_{-\lambda}^{+}(\infty)$ are discrete and $(0,\lambda_{0}] \ni \lambda \mapsto \psi_{-\lambda}(\infty)$ is non-increasing, it follows that $\psi_{-\lambda}^{+}(\infty) > 0 \ (\lambda \in (0,\lambda_{0}))$.
	This obviously implies $\lim_{x \to \infty} \psi_{-\lambda}(x) = \infty$ and $\lambda\int_{0}^{\infty}\psi_{-\lambda}(x)dm(x) < 1$.

	Since $\psi_{-\lambda_{0}}$ is a $(-\lambda_{0})$-eigenfunction of $L_{2}$, $\mathrm{e}^{\lambda_{0}t}\psi_{-\lambda_{0}}(X_{t})$ is a square-integrable martingale with respect to $\bP_{x}$ for every $x > 0$.
	From \cite[Proposition 2.4, Remark 2.3]{entranceBdry}, there exists $r > 0$ such that $\sup_{x \geq r}\bE_{x}[\mathrm{e}^{\lambda_{0} T_{r}}] < \infty$.
	By the optional stopping theorem, we have for every $x \geq r$
	\[
	\psi_{-\lambda_{0}}(x) = \bE_{x}[\mathrm{e}^{\lambda_{0} T_{r}}\psi_{-\lambda_{0}}(X_{T_{r}})] \leq \psi_{-\lambda_{0}}(r) \sup_{y \geq r}\bE_{y}[\mathrm{e}^{\lambda_{0} T_{r}}].
	\]
	Therefore, the function $\psi_{-\lambda_{0}}$ is bounded.
	The proof is complete.
\end{proof}

\begin{Rem}
	\begin{enumerate}
		\item The boundedness of an eigenfunction in the entrance case is a special case of the general result of \cite[Theorem 5]{TakedaTightness}, and our proof is based on the proof.
		\item We can also show by the same argument that every $L^2(m)$-eigenfunction is bounded when $\infty$ is entrance.
	\end{enumerate}
\end{Rem}

We can also characterize the square-integrability of $\psi_{-\lambda}$.

\begin{Prop} \label{prop:square-integrabilityOfPsi}
	The following hold:
	\begin{enumerate}
		\item When $\int_{1}^{\infty}x^{2}dm(x) < \infty$, we have $\int_{0}^{\infty}|\psi_{q}(x)|^{2}dm(x) < \infty$ for every $q \in \bC$.
		\item When $\int_{1}^{\infty}x^{2}dm(x) = \infty$ and $\lambda_{0} > 0$, we have $\int_{0}^{\infty}\psi_{-\lambda}(x)^{2}dm(x) = \infty$ for every $\lambda \in (0,\lambda_{0})$.
	\end{enumerate}
\end{Prop}

\begin{proof}
	Let $\int_{1}^{\infty}x^{2}dm(x) < \infty$ and $q \in \bC$.
	Part (i) readily follows from the following estimate derived by \eqref{eq12}:
	\[
	|\psi_{q}(x)| \leq x \mathrm{e}^{|q|\int_{0}^{x}ydm(y)}.
	\]

	Suppose $\int_{1}^{\infty}x^{2}dm(x) = \infty$ and $\lambda_{0} > 0$.
	Let $\lambda \in (0,\lambda_{0})$.
	When $\infty$ is entrance, Proposition \ref{prop2-8} (i) implies that $0 < \psi^{+}_{-\lambda}(\infty)x \leq \psi_{-\lambda}(x) \ (x > 0)$, and hence $\int_{0}^{\infty}\psi_{-\lambda}(x)^{2}dm(x) = \infty$.
	Let $\infty$ be natural.
	If $\psi_{-\lambda}$ is square-integrable, it follows from \eqref{LpDomainNatural} that $-\lambda \in \sigma(L_{2})$, and that contradicts to the definition of $\lambda_{0}$.
	The proof is complete.
\end{proof}

\begin{Rem}
	Part (i) of Proposition \ref{prop:square-integrabilityOfPsi} can be shown through the limit-circle/limit-point classification at $\infty$ (see e.g., \cite[Chapter 9.2]{Coddington}).
\end{Rem}

From Proposition \ref{prop2-8}, we can determine when $\psi_{-\lambda}$ is an eigenfunction of $L_{1}$.

\begin{Prop} \label{prop:eigenvaluesOfGenerator}
	Let $p \in [1,\infty]$ and let $\sigma_{\mathrm{eigen}}(L_{p})$ be the set of eigenvalues of $L_{p}$.
	The following hold:
	\begin{enumerate}
		\item When the boundary $\infty$ is entrance, we have $-\lambda_{0} \in \sigma_{\mathrm{eigen}}(L_{p})$ and $\sigma(L_{p}) \subset \{ z \in \bC \mid \Re z \leq -\lambda_{0} \}$.
		\item When the boundary $\infty$ is natural, we have $[-\lambda_{0},0) \in \sigma_{\mathrm{eigen}}(L_{1})$.
	\end{enumerate}
\end{Prop}

\begin{proof}
	Part (ii) readily follows from \eqref{LpDomainNatural} and Proposition \ref{prop2-8} (ii).

	Suppose $\infty$ is entrance.
	We see that $-\lambda_{0} \in \sigma_{p}(L_{1})$ from \eqref{LpDomainEntrance} and Proposition \ref{prop2-8} (i).
	From \cite[Section 2 (v)]{TakedaTightness}, the process $X$ is in class (T) (see \cite[p.489]{TakedaTightness}).
	From \cite[Theorem 2]{TakedaTightness}, we have for $p \in [1,\infty]$
	\[
	\lambda_{0} = -\lim_{t \to \infty}\frac{1}{t}\log ||T_{t}||_{p \to p},
	\]
	where $||\cdot||_{p \to p}$ denotes the operator norm on $L^{p}(m)$.
	This, with the Hille-Yosida theorem, shows $\sigma(L_{p}) \subset \{ z \in \bC \mid \Re z \leq -\lambda_{0} \}$ for $p \in [1,\infty)$.
	The case of $p = \infty$ follows from the fact that $\sigma(L_{\infty})$ is the complex conjugate of $\sigma(L_{1})$.
\end{proof}

\subsection{Laplace transforms of exit times} \label{section:LTofHittingTime}

We recall the classical result that the Laplace transform of exit times of intervals have a simple representation by eigenfunctions of $\cL$.
See also \cite[Chapter 5.15]{Ito_essentials}.

Fix $c > 0$ arbitrarily, and for $q \in \bC$ define $u = \Phi_{q}$ as the unique solution to the following integral equation:
\[
u(x) = 1 + q\int_{c}^{x}dy\int_{c}^{y}u(z)dm(z) \quad (x > 0).
\]
We can instead say that $u = \Phi_{q}$ is the unique solution to
\[
\cL u = q u , \quad u(c) = 1, \quad u^{+}(c) = 0.
\]
Similarly, define $u = \Psi_{q}$ as the unique solution to the following integral equation:
\[
u(x) = (x-c) + q\int_{c}^{x}dy\int_{c}^{y}u(z)dm(z) \quad (x > 0).
\]
The Wronskian of $\Phi_{q}$ and $\Psi_{q}$ equals one; $\Phi_{q}(x)\Psi_{q}^{+}(x) - \Phi_{q}^{+}(x)\Psi_{q}(x) = 1$.

We define the \textit{$q$-scale function} $W^{(q)}(x,y) \ (q \in \bC, \ x,y \in (0,\infty))$ by
\begin{align}	
W^{(q)}(x,y) := 
\begin{cases}
	\Phi_{q}(x)\Psi_{q}(y) - \Phi_{q}(y)\Psi_{q}(x) & (x \leq y) \\
	0 & (x > y)
\end{cases}
. \label{scaleFunc}
\end{align}

A notable feature of the $q$-scale function is that for each $x,y > 0$ the function $q \mapsto W^{(q)}(x,y)$  is entire.

\begin{Rem}
	The limit $\lim_{x \to 0}W^{(q)}(x,y)$ always exists by \cite[Theorem 5.13.4]{Ito_essentials}, and we denote it as $W^{(q)}(0,y)$.
	Note that the function $u = W^{(q)}(0,\cdot)$ satisfies $\cL u = qu,\ \lim_{y \to 0} u(y) = 0, \ \lim_{y \to 0}u^{+}(y) = 1$, and therefore $\psi_{q} = W^{(q)}(0,\cdot)$.
\end{Rem}

Note that for fixed $x > 0$, the function $u = W^{(q)}(x,\cdot)$ is the positive increasing solution to
\begin{align}	
\cL u(y) = q u(y) \ (y > x), \quad u(x) = 0, \quad u^{+}(x) = 1. \label{eq52}
\end{align}
Similarly, for fixed $y > 0$ the function $u = W^{(q)}(\cdot,y)$ is the positive decreasing solution to
\[
\cL u(x) = q u(x) \ (x < y), \quad u(y) = 0, \quad u^{+}(y) = -1.
\]
These imply $W^{(q)}(x,y) > 0$ for $0 < x < y$.
The above observations lead us to the following exit time formula (see \cite[Chapter 5]{Ito_essentials}).

\begin{Prop}[{\citet[Theorem 5.15.1]{Ito_essentials}}] \label{prop:two-sideExitTime}
	For $q \geq 0$ and $0 \leq x < y < z$, the following hold:
	\[
	\bE_{y}[\mathrm{e}^{-q T_{x}}, T_{x} < T_{z}] = \frac{W^{(q)}(y,z)}{W^{(q)}(x,z)} \quad \text{and} \quad  \bE_{y}[\mathrm{e}^{-q T_{z}}, T_{z} < T_{x}] = \frac{W^{(q)}(x,y)}{W^{(q)}(x,z)}.
	\]
\end{Prop}

The one-side exit time is also represented by an eigenfunction.

\begin{Prop}[{\citet[Theorem 5.15.1]{Ito_essentials}}]
	For $x > 0$ and $q \geq 0$, we have
\begin{align}
	g_{q}(x) = \bE_{x}[\mathrm{e}^{-q T_{0}}]. \label{eq16}
\end{align}
\end{Prop}

To analyze the behavior of the Laplace transform of the exit time with respect to $q$, the following simple formula on the scale function is quite useful.

\begin{Prop} \label{prop:resolventIdentity}
For $q,r \in \bC$ and $0 \leq x < y$, we have the following resolvent identity:
\begin{align}
	\begin{split}
	W^{(q)}(x,y) - W^{(r)}(x,y) &= (q-r)\int_{x}^{y}W^{(q)}(x,u)W^{(r)}(u,y)dm(u) \\
	&= (q-r)\int_{x}^{y}W^{(r)}(x,u)W^{(q)}(u,y)dm(u).
	\end{split}
	\label{eq55}
\end{align}
\end{Prop}

\begin{proof}
	We denote by $W^{(q)}_{i}(x,y) \ (i=1,2)$ for the right-derivative of $W^{(q)}(x,y)$ with respect to $i$-th variable.
	Note that
	\[
	W^{(q)}_{1}(x,y) = -1 - q \int_{x}^{y}W^{(q)}(u,y)dm(u) \quad \text{and} \quad
	W^{(q)}_{2}(x,y) = 1 + q \int_{x}^{y}W^{(q)}(x,u)dm(u).
	\]
	The case $q=0$ readily follows from \eqref{eq52}.
	Let $q \ne 0$.
	We have from the integration by parts formula
	\begin{align}
		&\int_{x}^{y}W^{(q)}(x,u)W^{(r)}(u,y)dm(u) \\
		 = &-\int_{x}^{y}W^{(r)}_{1}(u,y)du \int_{x}^{u}W^{(q)}(x,v)dm(v) \\
		= &-\int_{x}^{y}W^{(r)}_{1}(u,y) \frac{W^{(q)}_{2}(x,u)-1}{q} du \\
		= &-\frac{1}{q}\int_{x}^{y}W^{(q)}_{2}(x,u)W^{(r)}_{1}(u,y) du - \frac{W^{(r)}(x,y)}{q} \\
		= &-\frac{1}{q}\left( -W^{(q)}(x,y) - r\int_{x}^{y}W^{(q)}(x,u)W^{(r)}(u,y)dm(u) \right) - \frac{W^{(r)}(x,y)}{q}.
	\end{align}
	Rearranging the terms, we obtain the first equality in \eqref{eq55}.
	The second equality is given simply swapping $q$ and $r$ in the first equality.
\end{proof}

We show that the equality \eqref{eq16} is extended to $q \in (-\lambda_{0},0)$, which is important for our purpose.
To present the result, we first show the following

\begin{Thm} \label{thm:decayParameter}
	Define the decay parameter by
	\[
	\kappa := \sup \{ \lambda \geq 0 \mid \bE_{x}[\mathrm{e}^{\lambda T_{0}}] < \infty \ \text{for every (or some) $x > 0$} \}.
	\]
	Then we have
	\begin{align}
		\lambda_{0} = \kappa. \label{eq64}
	\end{align}
\end{Thm}

\begin{Rem}
	Theorem \ref{thm:decayParameter} is basically known: \cite[Theorem 4.1]{Littin} covers the entrance case, while \cite[Theorem 2]{Ratesofdecay} deals with the natural case.
	However, their arguments are quite different from ours.
\end{Rem}

\begin{Rem}
	The equality \eqref{eq64} generally fails, and $\kappa < \lambda_{0}$ holds when the state space is bounded (see \cite[Example 6.17]{Quasi-stationary_distributions}).
\end{Rem}

\begin{proof}[Proof of Theorem \ref{thm:decayParameter}]
	Set for $r > 0$
	\[
	\kappa_{r} := \sup \{ \lambda \geq 0 \mid \bE_{x}[\mathrm{e}^{\lambda T_{0}},T_{0} < T_{r}] < \infty \ (x \in (0,r) \}.
	\]

	Let $r >  0$ and $q \geq 0$.
	From Proposition \ref{prop:two-sideExitTime}, we have
	\begin{align}
		\bE_{x}[\mathrm{e}^{-qT_{0}}, T_{0} < T_{r}] = \frac{W^{(q)}(x,r)}{\psi_{q}(r)} \quad (x \in (0,r)). \label{eq53}
	\end{align}
	Let us focus on the RHS.
	As a function of $q$, the denominator has a zero at $q = -\lambda_{0}^{D,r}$.
	We see from the Strum-Liouville theory that for each $x \in (0,r)$
	\[
	\sup \{ \lambda \geq 0 \mid W^{(-\lambda)}(x,r) = 0 \} > \lambda_{0}^{D,r},
	\]
	and $W^{(-\lambda_{0}^{D,r})}(x,r) > 0$.
	Consequently, $W^{(q)}(x,r) / \psi_{q}(r)$ has a pole at $q = -\lambda_{0}^{D,r}$ and $\lim_{q \downarrow -\lambda_{0}^{D,r}}W^{(q)}(x,r) / \psi_{q}(r) = \infty$.

	We denote the power series expansion of $W^{(q)}(x,r) / \psi_{q}(r)$ around $0$ as $\sum_{n \geq 0}a_{n}q^{n}$.
	Let $\rho_{r}$ be the radius of convergence.
	We determine the coefficients $a_{n}$ through the LHS of \eqref{eq53}. 
	Clearly, $a_{0} = \bP_{x}[T_0 < T_r]$.
	For $a_{1}$, the monotone convergence theorem justifies the limit
	\[
	a_{1} = \lim_{q \downarrow 0} \bE_{x}\left[\frac{\mathrm{e}^{-qT_{0}}-1}{q},T_{0} < T_{r} \right]
	= -\bE_{x}[T_{0}, T_{0} < T_{r}],
	\]
	which in particular shows that $\bE_{x}[T_{0}, T_{0} < T_{r}]$ is finite.
	The coefficients $a_{n}$ ($n \geq 2$) are obtained inductively in the same way:
	once $a_{0},\dots,a_{n-1}$ and $\bE_{x}[T_0^{n-1},T_{0} < T_{r}] < \infty$ are known, we have
	\[
	a_{n} = \frac{(-1)^{n-1}}{n!}\lim_{q \downarrow 0}
	\bE_{x} \left[T_{0}^{n-1}\frac{e^{-qT_{0}} - 1}{q}, T_{0} < T_{r} \right]
	= \frac{(-1)^{n}}{n!}\bE_{x}[T_{0}^{n}, T_{0} < T_{r}].
	\]
	This shows that 
	\begin{align}
		\frac{W^{(-q)}(x,r)}{\psi_{-q}(r)} = \sum_{n \geq 0}\frac{q^{n}}{n!}\bE_{x}[T_{0}^{n}, T_{0} < T_{r}] \quad (|q| < \rho_{r}), \label{eq54}
	\end{align}
	and since the coefficients in the RHS of \eqref{eq54} are positive, the pole with the minimum absolute value is non-negative and coincides with the radius of convergence.
	That is, we obtain $\rho_{r} = \lambda_{0}^{D,r}$ and $\bE_{x}[\mathrm{e}^{\lambda_{0}^{D,r}T_{0}},T_{0} < T_{r}] = \infty$.
	This implies that $\lambda_{0}^{D,r} = \kappa_{r}$, and taking limit as $r \to \infty$, we have $\kappa \leq \lim_{r \to \infty}\kappa_{r} = \lim_{r \to \infty} \lambda_{0}^{D,r} = \lambda_{0}$.

	Suppose for the sake of contradiction that $\kappa < \lambda_{0}$.
	Note that this assumption implies $\kappa < \lambda_{0} < \lambda_{0}^{D,r} = \kappa_{r}$ for every $r > 0$.
	Let $\kappa < \lambda' < \lambda < \lambda_{0}$.
	We have from Proposition \ref{prop:two-sideExitTime} and \ref{prop:resolventIdentity} that
	\[
	1 - \frac{\psi_{-\lambda}(r)}{\psi_{-\lambda'}(r)} = (\lambda - \lambda')\int_{0}^{r}\psi_{-\lambda}(u)\bE_{u}[\mathrm{e}^{\lambda' T_{0}},T_{0} < T_{r}]dm(u).
	\]
	Since the LHS is less than $1$ by Proposition \ref{prop:monotonicityOfPsi},
	we have from the monotone convergence theorem
	\[
	\int_{0}^{\infty}\psi_{-\lambda}(u)\bE_{u}[\mathrm{e}^{\lambda' T_{0}}]dm(u) \leq \frac{1}{\lambda - \lambda'} < \infty,
	\]
	which implies $\kappa \geq \lambda'$.
	This is contradiction, and we obtain $\kappa = \lambda_{0}$.
\end{proof}

We present the announced result.

\begin{Prop}
	Suppose $\lambda_{0} > 0$.
	For $\lambda \in (0,\lambda_{0})$, we have
	\begin{align}
		\bE_{x}[\mathrm{e}^{\lambda T_{0}}] = \psi_{-\lambda}(x) \int_{x}^{\infty}\frac{dy}{\psi_{-\lambda}(y)^{2}} \quad (x > 0). \label{eq57}
	\end{align}
	In particular, 
	\begin{align}
		\int_{1}^{\infty}\frac{dy}{\psi_{-\lambda}(y)^{2}} < \infty. \label{eq18}
	\end{align}
\end{Prop}

\begin{proof}
	Let $q \geq 0$.
	Since $(0,r) \ni x \mapsto \bE_{x}[\mathrm{e}^{-qT_{0}}, T_{0} < T_{r}]$ is a positive decreasing solution to
	\[
	\cL u = qu, \quad \lim_{x \to 0}u(x) = 1, \quad \lim_{x \to r}u(x) = 0,
	\]
	we have the following representation: for every $x \in (0,r)$
	\begin{align}
	\bE_{x}[\mathrm{e}^{-qT_{0}}, T_{0} < T_{r}] = \psi_{q}(x)\int_{x}^{r}\frac{du}{\psi_{q}(u)^{2}}. \label{eq56}
	\end{align}
	The LHS is analytic in $q$ on $\{ z \in \bC \mid \Re z > -\lambda_{0} \}$.
	From Proposition \ref{prop:monotonicityOfPsi}, we can take a region $U \subset \bC$ such that $[-\lambda_{0},0] \subset U$ and
	\begin{align}
		\inf_{u \in [x,y], q \in \overline{U}} |\psi_{q}(u)| > 0, \label{}
	\end{align}
	where $\overline{U}$ denotes the closure of $U$, so that the function
	\begin{align}
		q \longmapsto \psi_{q}(x)\int_{x}^{r}\frac{du}{\psi_{q}(u)^{2}} \label{}
	\end{align}
	is analytic on $U$.
	Hence, the equality \eqref{eq56} is analytically extended to $q \in (-\lambda_{0},0)$.
	Fix $\lambda \in (0,\lambda_{0})$.
	Taking limit as $r \to \infty$ in \eqref{eq56} for $q = -\lambda$,
	we obtain the desired result from the monotone convergence theorem.
\end{proof}

\begin{Rem}
	By taking limit as $\lambda \to \lambda_{0}$, the equality \eqref{eq57} can be extended to $\lambda_{0}$.
	The limit is, however, not necessarily finite.
	When the boundary $\infty$ is entrance, the limit is infinite since $\psi_{-\lambda_{0}}$ is bounded.
	When the boundary $\infty$ is natural, both cases can occur where the limit is finite or infinite (see examples in Section \ref{section:examples}).
\end{Rem}

\section{Concave functions and $h$-transform} \label{section:concaveFunctions}

We introduce a class of concave functions central to our discussion and observe that such functions induce non-negative supermartingales.

\subsection{A class of concave functions} \label{section:classOfConcaveFunction}

We introduce a class of concave functions.

\begin{Def}\label{def:concaveFunctions}
Let $\cC$ denote the set of functions $\rho:(0,\infty) \to (0,\infty)$ that can be written as
\begin{align}
	\rho(x) 
	= \int_{0}^{x} dy \int_{y}^{\infty} \phi(z) dm(z)
	= \int_{0}^{\infty} (x \wedge y)\phi(y) dm(y)
	\quad (x > 0) \label{eq03}
\end{align}
for some non-negative, continuous, $m$-integrable function $\phi$ that is not identically zero.
\end{Def}

For clarity, we give another characterization of $\cC$.

\begin{Prop} \label{prop:characterizationOfC}
	A function $\rho: (0,\infty) \to (0,\infty)$ is in $\cC$ if and only if the following hold:
	\begin{enumerate}
		\item $\rho \in \cD(\cL)$ and $\phi := -\cL \rho \geq 0$.
		\item $\lim_{x \to 0}\rho(x) = 0$ and $\lim_{x \to 0}\rho^{+}(x) \in (0,\infty)$.
		\item $\lim_{x \to \infty}\rho^{+}(x) = 0$.
	\end{enumerate}
	In addition, every element $\rho \in \cC$ satisfies
	\begin{align}
	\lim_{x \to \infty}\rho(x)/x = 0. \label{eq61}
	\end{align}
\end{Prop}

\begin{proof}
	Since the direct implication is clear, we only prove the converse.
	From (i) and the definition of $\cL$, we have
	\begin{align}		
	\rho(x) = \rho(c) + \rho^{+}(c)(x-c) - \int_{c}^{x}dy\int_{c}^{y}\phi(z)dm(z) \quad (x > 0) \label{eq58}
	\end{align}
	and
	\[
	\rho^{+}(x) = \rho^{+}(c) - \int_{c}^{x}\phi(z)dm(z) \quad (x > 0)
	\]
	for some $c > 0$.
	The latter equation, with (ii), (iii) and the monotone convergence theorem, shows that 
	\[
	\rho^{+}(c) = \int_{c}^{\infty}\phi(z)dm(z) \quad \text{and} \quad \int_{0}^{c}\phi(z)dm(z) < \infty.
	\]
	In particular, $\phi$ is $m$-integrable.
	Plugging this into \eqref{eq58} and using (ii) and the monotone convergence theorem again, we have
	\[
	\rho(c) = \int_{0}^{c}dy\int_{y}^{\infty}\phi(z)dm(z) 
	\]
	Substituting this into \eqref{eq58} yields the representation \eqref{eq03}.
	The relation \eqref{eq61} follows by applying the dominated convergence theorem to \eqref{eq03}.
\end{proof}

\begin{Rem}
	The following potential formula is classical: for every non-negative function $f$ 
	\begin{align}		
	Uf(x) := \bE_{x}\left[ \int_{0}^{T_{0}}f(X_{s})ds \right] = \int_{0}^{\infty}(x \wedge y)f(y)dm(y) \quad (x > 0) \label{eq66}
	\end{align}
	(see e.g., \cite[Lemma 33.10]{Kallenberg-third}).
	Thus, we have $\cC = U(\cA)$ for 
	\begin{align}
	\cA := \{ \phi:(0,\infty) \to [0,\infty) \mid \phi \not\equiv 0, \ \phi \text{ is $m$-integrable, non-negative, and continuous} \}.			
	\end{align}
	We remark that the density function $x \wedge y$ in the RHS of \eqref{eq66} is obtained by substituting $q=0$ into \eqref{defOfResolventDensity}.
	Although this might suggest that class $\cC$ is obtained as the image of a certain set of non-negative $m$-integrable functions by the resolvent $R^{(q)} \ (q > 0)$, this is generally not the case.
	To see this, we consider the case of $m(1,\infty) = \infty$.
	Suppose for the sake of contradiction that $R^{(q)}f \in \cC$ for some $q > 0$ and non-negative $m$-integral function $f$.
	Then it follows that $R^{(q)}f$ is $m$-integrable since $R^{(q)}$ maps $L^{1}(m)$ to itself.
	Set  $\phi := -\cL(R^{(q)}f)$.
	We have by Fubini's theorem that
	\[
	\int_{0}^{\infty}R^{(q)}f(x)dm(x) = \int_{0}^{\infty}\phi(x)dm(x)\int_{0}^{x}m(y,\infty)dy = \infty,
	\]
	and this is contradiction.
	Consequently, we have $R^{(q)}(\cA') \cap \cC = \emptyset$ for any set $\cA'$ of non-negative $m$-integrable functions.
	Conversely, when $\infty$ is entrance, it follows from Proposition \ref{prop:eigenvaluesOfGenerator} that $0 \not\in \sigma(L_{1})$, and we can represent $\cA$ as the image of a certain set of $m$-integrable functions by $R^{(q)}$ for each $q > 0$.
\end{Rem}

From Proposition \ref{prop2-8}, we have the following

\begin{Prop}
Suppose $\lambda_{0}>0$ and let $q \in \bR$. 
Then $\psi_{q}\in\cC$ if and only if
\begin{enumerate}
    \item $\infty$ is entrance and $q=-\lambda_{0}$, or
    \item $\infty$ is natural and $q\in[-\lambda_{0},0)$.
\end{enumerate}
\end{Prop}

\subsection{Doob's $h$-transform by concave functions} \label{section:h-transform}

Since $\rho \in \cC$ is concave, It\^o formula shows that $\rho(X_{t})/\rho(x)$ is a non-negative supermartingale under $\bP_{x} \ (x > 0)$ (see e.g., \cite[Theorem 29.5]{Kallenberg-third}).
Thus, we can consider the change of measure by it.
However, what is more important here is that the resulting process again gives a one-dimensional diffusion.
Such a fact was investigated in \cite{Maeno} and \cite{TakemuraTomisaki:htransform} under the name of $h$-transform in a wide sense.

Let $\rho$ be an element of $\cC$.
Define the $h$-transform of $L$ by $\rho$ as
\begin{align}
	\tilde{L}^{[\rho]}u := \frac{1}{\rho}L(\rho u) \label{eq59}
\end{align}
for a function $u$ such that $\rho u \in \cD(L)$; $\cD(\tilde{L}^{[\rho]}) := \{ u:(0,\infty) \to \bR \mid \rho u \in \cD(L) \}$.
Then from \cite[Theorem 2.2]{Maeno}, the operator $\tilde{L}^{[\rho]}$ is again a generator of a one-dimensional diffusion.
Specifically, we have
\[
\tilde{L}^{[\rho]} = L^{(m^{[\rho]},s^{[\rho]},k^{[\rho]})} 
\]
for
\begin{align}
	dm^{[\rho]}(x) := \rho(x)^{2}dm(x), \quad ds^{[\rho]}(x) = \frac{dx}{\rho(x)^{2}}, \label{}
\end{align}
and
\[
dk^{[\rho]}(x) := \frac{\cL \rho(x)}{\rho(x)}dm^{[\rho]}(x),
\]
where the Dirichlet boundary condition is set at a boundary when it is regular for $L^{(m^{[\rho]},s^{[\rho]},k^{[\rho]})}$.
This implies 
\begin{align}
\tilde{\bP}^{[\rho]}_{x}[A] = \bE_{x}\left[
\frac{\rho(X_{t})}{\rho(X_{0})},A
\right] \quad (t \geq 0, \ A \in \cF_{t}), \label{eq45}
\end{align}
where $(\tilde{\bP}^{[\rho]}_{x})_{x > 0}$ is the underlying probability distributions of the $L^{(m^{[\rho]},s^{[\rho]},k^{[\rho]})}$-diffusion.
Define
\[
L^{[\rho]} := L^{(m^{[\rho]},s^{[\rho]},0)} =  \frac{d}{dm^{[\rho]}}\frac{d^{+}}{ds^{[\rho]}},
\]
that is, $L^{[\rho]}$ is given by removing the killing measure from $\tilde{L}^{[\rho]}$.

\begin{Prop} \label{prop:BoundaryClassification}
	Let $\rho \in \cC$. 
	The boundary $0$ is entrance for $L^{[\rho]}$.
	If the boundary $\infty$ is natural for $L$, then it is so for $L^{[\rho]}$.
\end{Prop}

\begin{proof}
	Since we have $cx \leq \rho(x) \leq Cx \ (0 \leq x \leq 1)$ for some $0 < c < C$ from Proposition \ref{prop:characterizationOfC} (ii), we easily see the boundary $0$ is entrance for $L^{[\rho]}$.
	Suppose $\infty$ is natural for $L$.
	Since $\rho$ is non-decreasing, we have
	\begin{align}
		\int_{1}^{\infty}ds^{[\rho]}(dx)\int_{x}^{\infty}dm^{[\rho]}(dy) = \int_{1}^{\infty}\frac{dx}{\rho(x)^{2}}\int_{x}^{\infty}\rho(y)^{2}dm(y) \geq \int_{1}^{\infty}dx\int_{x}^{\infty}dm(y) = \infty. \label{}
	\end{align}
	Since $\rho$ is concave and $\rho(0) = 0$, we have $\rho(x) / x$ is non-increasing.
	It follows
	\begin{align}
		\int_{1}^{\infty}dm^{[\rho]}(dx)\int_{x}^{\infty}ds^{[\rho]}(dy) &= \int_{1}^{\infty}\rho(x)^{2}dm(x)\int_{x}^{\infty}\frac{dy}{\rho(y)^{2}} \nonumber \\
		&\geq \int_{1}^{\infty}\rho(x)^{2}\frac{x^{2}}{\rho(x)^{2}}dm(x)\int_{x}^{\infty}\frac{dy}{y^{2}} \nonumber \\
		&= \int_{1}^{\infty}xdm(x) = \infty. \nonumber
	\end{align}
	Therefore, the boundary $\infty$ is natural for $L^{[\rho]}$.	
\end{proof}

We denote by $(\bP^{[\rho]}_{x})_{x > 0}$ the underlying probability distributions of diffusions corresponding to $L^{[\rho]}$.
Since the killing measure $dk^{[\rho]}$ induces killing by the continuous multiplicative functional 
\begin{align}	
N_{t}^{[\rho]} := \exp\left(\int_{0}^{t}\frac{\cL \rho(X_{u})}{\rho(X_{u})}du \right), \label{MFN}
\end{align}
the distribution of $L^{[\rho]}$-diffusion is given by a change of measure of $\tilde{\bP}_{x}^{[\rho]}$ by $N^{{\rho}}_{t}$
\[
\tilde{\bP}^{[\rho]}_{x}[A] = \bE^{[\rho]}[N_{t}^{[\rho]},A] \quad (t \geq 0, \ A \in \cF_{t})
\]
(see \cite[Chapter 5.6]{Ito-McKean}).
Combining this with \eqref{eq45}, we have
\begin{align}
	\bP_{x}^{[\rho]}[A] = \bE_{x}[M^{[\rho]}_{t},A] \quad (A \in \cF_{t}) \label{absoluteContinuityRelation}
\end{align}
for
\begin{align}
	M_{t}^{[\rho]} := \frac{\rho(X_{t})}{\rho(X_{0})} \exp \left(-\int_{0}^{t}\frac{\cL\rho(X_{s})}{\rho(X_{s})}ds \right), \label{martingale}
\end{align}
that is, the (sub)probability $\bP_{x}^{[\rho]}$ is a change of measure of $\bP_{x}$ by a supermartingale $M^{[\rho]}$.
Since $M_{t}^{[\rho]} > 0$ for $t < T_{0}$, the following holds for every bounded $\cF_{t}$-optional time $\tau$:
\begin{align}
	\bP_{x}[A,\tau < T_{0}] = \bE_{x}^{[\rho]}\left[ \frac{1_{A}}{M_{\tau}} \right] \quad (A \in \cF_{\tau}). \label{h-transformAbsContinuity}
\end{align}

Note that from Proposition \ref{prop:BoundaryClassification}, when the boundary $\infty$ is natural for $L$, the $L^{[\rho]}$-diffusion is conservative.
Also, when $\infty$ is entrance for $L$, the $L^{[\rho]}$-diffusion is conservative if and only if $\infty$ is entrance or natural for $L^{[\rho]}$.
In these cases, we have $\bE_{x} M^{[\rho]}_{t} = 1$ for every $t > 0$, and $M^{[\rho]}_{t}$ is a martingale under $\bP_{x}$.
We summarize this discussion as a proposition.

\begin{Prop} \label{prop:martingale}
	Let $\rho \in \cC$.
	The process $M^{[\rho]}_{t}$ is a non-negative $\cF_{t}$-supermartingale under $\bP_{x}$ for every $x > 0$.
	In addition, it is a $\cF_{t}$-martingale if and only if 
	\begin{enumerate}
		\item $\infty$ is entrance for $L$ and $\int_{1}^{\infty}\rho(x)^{2}dm(x)\int_{x}^{\infty}\frac{dy}{\rho(y)^{2}} = \infty$, or
		\item $\infty$ is natural for $L$.
	\end{enumerate}
\end{Prop}

\begin{Cor}
	The following hold:
	\begin{enumerate}
		\item When $\infty$ is entrance, $M_{t}^{[\psi_{-\lambda_{0}}]}$ is a $\cF_{t}$-martingale.
		\item When $\infty$ is natural and $\lambda_{0} > 0$, for every $\lambda \in (0,\lambda_{0}]$ $M_{t}^{[\psi_{-\lambda}]}$ is a $\cF_{t}$-martingale.
	\end{enumerate}
	Suppose $\lambda_{0} > 0$ and $\infty$ is entrance.
	Then $M_{t}^{[\psi_{-\lambda_{0}}]}$ is a $\cF_{t}$-martingale.
\end{Cor}

\begin{proof}
	Since $\psi_{-\lambda_{0}}$ is bounded from Proposition \ref{prop2-8}, we have $\int_{1}^{\infty}\frac{dy}{\psi_{-\lambda_{0}}(y)^{2}} = \infty$.
\end{proof}

\begin{Rem}
Assume $\infty$ is natural and $\lambda_{0} > 0$.
Let $\lambda \in (0,\lambda_{0})$.
Since $s^{[\psi_{-\lambda}]}(1,\infty) < \infty$ by \eqref{eq18} and $0$ is entrance for $L^{[\psi_{-\lambda}]}$, the process $X$ is transient to $\infty$ under $\bP^{[\psi_{-\lambda}]}_{x}$.
More precisely, we have from Proposition \ref{prop:BoundaryClassification} that 
\begin{align}
	\bP_{x}^{[\psi_{-\lambda}]}[X_{t} \in (0,\infty) \ \text{for every $t > 0$}]  = 1 \quad \text{and} \quad \bP_{x}^{[\psi_{-\lambda}]} \left[\lim_{t \to \infty} X_{t} =\infty \right] = 1. \label{eq35}
\end{align}
(see e.g., \citet[Theorem 33.15]{Kallenberg-third}).
Under $\bP_{x}^{[\psi_{-\lambda_{0}}]}$, both cases can occur where the process $X$ is transient or recurrent (see Section \ref{section:examples}).
\end{Rem}

From Proposition \ref{prop:BoundaryClassification}, when $\psi_{-\lambda} \in \cC$ for $\lambda \in (0,\lambda_{0}]$,
the boundary $0$ of $L^{[\psi_{-\lambda}]}$ is entrance.
This allows us to consider $\bP^{[\psi_{-\lambda}]}_{0}$, the distribution of $L^{[\psi_{-\lambda}]}$-diffusion starting from $0$ (see e.g., \citet[Theorem 33.13]{Kallenberg-third}).
For later use in Section \ref{section:examples}, we present a formula of the exit time under $\bP_{x}^{[\psi_{-\lambda}]} \ (x \geq 0)$.

\begin{Prop} \label{prop:HittingTimeUnderEntrance}
	Assume $\lambda_{0} > 0$.
	For $\lambda \in (0,\lambda_{0}]$, suppose $\psi_{-\lambda} \in \cC$.
	Then for $0 < x \leq y$, and $q \geq 0$, we have
	\begin{align}
		\bE_{x}^{[\psi_{-\lambda}]}[\mathrm{e}^{-q T_{y}}] = \frac{\psi_{q-\lambda}(x)}{\psi_{-\lambda}(x)} \frac{\psi_{-\lambda}(y)}{\psi_{q-\lambda}(y)} \quad \text{and} \quad \bE_{0}^{[\psi_{-\lambda}]}[\mathrm{e}^{-q T_{y}}] = \frac{\psi_{-\lambda}(y)}{\psi_{q-\lambda}(y)}.\label{}
	\end{align}
\end{Prop}

\begin{proof}
	It is classically known that
	\begin{align}
		\bE^{[\psi_{-\lambda}]}_{x}[\mathrm{e}^{-q T_{y}}] = \frac{\varphi^{(\lambda)}_{q}(x)}{\varphi^{(\lambda)}_{q}(y)}, \label{}
	\end{align}
	where $u = \varphi^{(\lambda)}_{q}$ is a unique solution of the following integral equation:
	\begin{align}
		u(x) = 1 + q \int_{0}^{x}ds^{[\psi_{-\lambda}]}(y)\int_{0}^{y}u(z)dm^{[\psi_{-\lambda}]}(z) \label{}
	\end{align}
	(see e.g., \citet[Chapter 5.15]{Ito_essentials}).
	By the definition of $dm^{[\psi_{-\lambda}]}$ and $ds^{[\psi_{-\lambda}]}$ we easily see that $\varphi^{(\lambda)}_{q} = \psi_{q-\lambda} / \psi_{-\lambda}$.
\end{proof}

\section{Conditional limits to avoid zero} \label{section:conditinalLimit}

We now get into our main problem.
The following is one of the main results in the present paper.
This theorem extends Theorem \ref{thm:conditionalLimitPrevious} and shows the existence of the conditioning to avoid zero by a clock induced by $\cC$.

\begin{Thm} \label{thm:conditionalLimit}
	Let $\rho \in \cC$.
	For $r > 0$, define an optional time
	\begin{align}
		S_{r}^{[\rho]} := \inf \left\{ t \geq 0 ~\middle|~  M_{t} \geq r / \rho(X_{0}) \right\}. \label{clock}
	\end{align}
	For every $s > 0$, $x > 0$ and $A \in \cF_{s}$, we have
	\begin{align}
		\lim_{r \to \infty} \bP_{x}[A \mid T_{0} > S_{r}^{[\rho]} ] = \bP_{x}^{[\rho]}[A]. \label{eq42}
	\end{align}
\end{Thm}

For the proof, we first show a key formula on the exit probability, which was given in \citet[Lemma 5]{Ratesofdecay} in the case $\rho = \psi_{-\lambda}$, and we generalize it.

\begin{Lem} \label{lem:exitTime}
Let $\rho \in \cC$ and let $r,x > 0$ such that $\rho(x) < r$.
For every $s > 0$, we have
\begin{align}
	\bP_{x}[S^{[\rho]}_{r} < T_{0} \wedge s ] = \frac{\rho(x)}{r}\bP_{x}^{[\rho]}[S_{r}^{[\rho]} < s]. \label{}
\end{align}
We also have
\begin{align}
	\bP_{x}^{[\rho]}[S_{r}^{[\rho]} < \infty] = 1 \quad \text{and} \quad \lim_{r \to \infty}\bP_{x}^{[\rho]}[S_{r}^{[\rho]} < s] = 0. \label{eq67}
\end{align}
Consequently, it follows
\begin{align}
	\lim_{r \to \infty}\frac{\bP_{x}[S^{[\rho]}_{r} < T_{0} \wedge s]}{\bP_{x}[S^{[\rho]}_{r} < T_{0}]} = 0. \label{eq60}
\end{align}
\end{Lem}

\begin{proof}
	Let $r,x > 0$ such that $\rho(x) < r$ and fix $s > 0$.
	It follows from \eqref{h-transformAbsContinuity} that
	\[
	\bP_{x}[S^{[\rho]}_{r} < T_{0} \wedge s ] = \bE_{x}^{[\rho]}\left[ \frac{1}{M_{S_{r}^{[\rho]}}},S_{r}^{[\rho]} < s \right] = \frac{\rho(x)}{r}\bP_{x}^{[\rho]}[ S_{r}^{[\rho]} < s].
	\]
	We show \eqref{eq67}.
	When $s^{[\rho]}(1,\infty) = \int_{1}^{\infty}dx / \rho(x)^{2} < \infty$, it follows $\lim_{x \to \infty}\rho(x) = \infty$.
	Since $\bP_{x}^{[\rho]}[\lim_{t \to \infty}X_{t} = \infty] = 1$, we have $\bP_{x}^{[\rho]}[\lim_{t \to \infty}M_{t}^{[\rho]} = \infty] = 1$, which implies $\bP_{x}^{[\rho]}[S_{r} < \infty] = 1$.
	When $s^{[\rho]}(1,\infty) = \infty$, the process $X$ is Harris recurrent under $\bP_{x}^{[\rho]}$: for every measurable set $A$ with $m^{[\rho]}(A) > 0$
	\[
	\int_{0}^{\infty}1_{A}(X_{t})dt = \infty \quad \bP_{x}^{[\rho]}\text{-a.s.}
	\]
	(see e.g., \cite[Theorem 26.17]{Kallenberg-third}).
	This implies
	\[
	-\int_{0}^{\infty}\frac{\cL \rho}{\rho}(X_{t})dt = \infty \quad \bP_{x}^{[\rho]}\text{-a.s.}
	\]
	since the function $-\cL\rho$ is non-negative, continuous, and not identically zero.
	Thus, we obtain $\bP_{x}^{[\rho]}[S_{r} < \infty] = 1$.
	Since  
	\[
	\sup_{t \in [0,s]}\rho(X_{t})\exp \left(-\int_{0}^{t}\frac{\cL\rho}{\rho}(X_{u})du\right) = \infty
	\]
	holds on $\lim_{r \to \infty} \{ S_{r}^{[\rho]} < s \}$, we readily see that $\lim_{r \to \infty}\bP_{x}^{[\rho]}[S_{r}^{[\rho]} < s] = 0$ from the continuity of $X_{t}$ and $\rho$.
\end{proof}

We prove Theorem \ref{thm:conditionalLimit}.

\begin{proof}[Proof of Theorem \ref{thm:conditionalLimit}]
	From \eqref{eq60}, it is enough to show
	\begin{align}
		\lim_{r \to \infty} r \bP_{x}[A, T_{0} > S^{[\rho]}_{r} > s ] = \rho(x)\bE_{x}\left[M^{[\rho]}_{s},A \right] \label{}
	\end{align}
	for $A \in \cF_{s}$.
	From the Markov property, we have $S^{[\rho]}_{r N_{s}^{[\rho]}} \circ \theta_{s} + s = S^{[\rho]}_{r}$ on $\{ S^{[\rho]}_{r} > s \}$,
	where $N^{[\rho]}_{s}$ is a multiplicative functional given in \eqref{MFN}.
	Lemma \ref{lem:exitTime} and the monotone convergence theorem give
	\begin{align}
		r\bP_{x}[A , T_{0} > S^{[\rho]}_{r} > s ] = &r\bE_{x}[\bP_{X_{s}}[T_{0} > S^{[\rho]}_{r u}]_{u = N_{s}^{[\rho]}}1_{A}, S^{[\rho]}_{r} \wedge T_{0} > s] \label{} \\
		= &\bE_{x}\left[ (N_{s}^{[\rho]})^{-1}\rho(X_{s})1_{A}, S^{[\rho]}_{r} \wedge T_{0} > s \right] \label{} \\
		\xrightarrow{r \to \infty} &\rho(x)\bE_{x}\left[ M^{[\rho]}_{s},A \right], \label{}
	\end{align}
	and we obtain the desired result.
\end{proof}

\section{Absolute continuity relations of limit distributions} \label{section:absoluteContinuity}

The map $\cC \ni \rho \mapsto (\bP_{x}^{[\rho]})_{x > 0}$ is injective modulo multiplication of positive constants to $\rho$.
Here we consider a classification by absolute continuity relation on $\cF_{\infty}$, that is, we investigate when two distributions $\bP_{x}^{[\alpha]}$ and $\bP_{x}^{[\beta]}$ for $\alpha,\beta \in \cC$ are absolutely continuous or singular on $\cF_{\infty}$.

We first consider the case where $\alpha \in \cC$ satisfies $s^{[\alpha]}(1,\infty) < \infty$.
In this case, perturbations preserve the absolute continuity.

\begin{Thm} \label{thm:mutualAbsConti}
	Let $\alpha, \beta \in \cC$.
	Suppose $s^{[\alpha]}(1,\infty) < \infty$ and the boundary $\infty$ is natural for $L^{[\alpha]}$ and $L^{[\beta]}$.
	Set
	\begin{align}
		\delta(x) := \frac{\cL\alpha(x)}{\alpha(x)} - \frac{\cL\beta(x)}{\beta(x)}, \label{eq34}
	\end{align}
	and assume
	\begin{align}
		&\int_{1}^{\infty}(\alpha\beta|\delta|)(x)dm(x)\int_{x}^{\infty}\frac{dy}{\alpha(y)^{2}} < \infty \label{eq29}
	\end{align}
	and
	\begin{align}
		\int_{0}^{\infty}(\alpha^{2}\delta_{-})(x)dm(x)\int_{x}^{\infty}\frac{dy}{\alpha(y)^{2}} < 1, \label{eq33}
	\end{align}
	where $\delta_{-} := -(\delta \wedge 0)$.
	Then limit $c := \lim_{x \to \infty} \beta(x)/\alpha(x) \in (0,\infty
	)$ exists and the following hold for every $x > 0$:
	\begin{align}
		 \bE_{x}^{[\alpha]}\left[ \exp \left(\int_{0}^{\infty} \delta_{-}(X_{s})ds\right) \right] < \infty \nonumber
	\end{align}
	and 
	\begin{align}
		\bP_{x}^{[\beta]}[A] = c \frac{\alpha(x)}{\beta(x)}\bE^{[\alpha]}_{x} \left[ \exp \left( -\int_{0}^{\infty} \delta(X_{s})ds \right), A\right] \quad (A \in \cF_{\infty}). \label{eq31}
	\end{align}
	Consequently, the distribution $\bP_{x}^{[\beta]}$ is absolutely continuous with respect to $\bP_{x}^{[\alpha]}$ on $\cF_{\infty}$.
\end{Thm}

\begin{Rem}
	Under the assumptions of Theorem \ref{thm:mutualAbsConti}, we have $s^{[\beta]}(1,\infty) < \infty$ since $\lim_{x \to \infty}\beta(x)/\alpha(x) \in (0,\infty)$.
\end{Rem}

\begin{proof}[Proof of Theorem \ref{thm:mutualAbsConti}]
	We first show limit $\lim_{n \to \infty} \beta(x)/\alpha(x)$ exists.
	We have	
	\begin{align}
		\left| \left( \frac{\beta}{\alpha} \right)^{+} (x) \right|
		= \left| \frac{\alpha \beta^{+} - \alpha^{+}\beta}{\alpha^{2}} (x)\right| = \frac{\left| \int_{0}^{x}(\alpha (\cL\beta) - (\cL\alpha)\beta )dm \right|}{\alpha(x)^{2}} \leq \frac{\int_{0}^{x}(\alpha\beta|\delta|) dm}{\alpha(x)^{2}}. \label{}
	\end{align}
	Since the rightmost term is integrable w.r.t.\ the Lebesgue measure on $[1,\infty)$ from \eqref{eq29}, the following limit exists from the dominated convergence theorem:
	\begin{align}
		c := \lim_{x \to \infty}\frac{\beta}{\alpha}(x) = \frac{\beta}{\alpha}(1) + \int_{1}^{\infty}\left( \frac{\beta}{\alpha} \right)^{+}(y)dy \in [0,\infty). \label{}
	\end{align}
	By the definition of $\bP^{[\alpha]}_{x}$ and $\bP^{[\beta]}_{x}$,
	we have for $0 < t \leq u$ and $A \in \cF_{t}$
	\begin{align}
		\bP_{x}^{[\beta]}[A] = \frac{\alpha(x)}{\beta(x)}\bE^{[\alpha]}_{x} \left[ \frac{\beta(X_{u})}{\alpha(X_{u})} \exp \left( -\int_{0}^{u} \delta_{+}(X_{s})ds + \int_{0}^{u} \delta_{-}(X_{s}) ds \right), A \right], \label{eq27c}
	\end{align}
	where $\delta_{+} := \delta \vee 0$.
	Since the process $X$ is transient to $\infty$ under $\bP^{[\alpha]}_{x}$ by the assumption that $s^{[\alpha]}(1,\infty) < \infty$ and $\beta / \alpha$ is bounded,
	if we can show
	\begin{align}
		\bE_{x}^{[\alpha]} \left[ \exp \left( \int_{0}^{\infty} \delta_{-}(X_{s})ds \right) \right] < \infty, \label{eq32}
	\end{align}
	we may apply the dominated convergence theorem to \eqref{eq27c} for limit $u \to \infty$ and obtain \eqref{eq31} for $A \in \cF_{t}$.
	That implies $c > 0$ and the desired result since $t > 0$ and $A \in \cF_{t}$ are arbitrary.
	Thus, we show \eqref{eq32}.
	By Kac's moment formula (see \citet[(4)]{FitzsimmonsPitman-KacMomentFormula}),
	we have
	\begin{align}
		\mu_{n} := \bE_{x}^{[\alpha]} \left[ \left(\int_{0}^{\infty}\delta_{-}(X_{s})ds \right)^{n} \right] = n! (G^{n}_{\delta_{-}}1)(x), \label{}
	\end{align}
	where $1(x) \equiv 1$ is a constant function and $G_{\delta_{-}}$ is an  operator defined by
	\begin{align}
		G_{\delta_{-}}f(x) := \bE_{x}^{[\alpha]} \left[ \int_{0}^{\infty}f(X_{s})\delta_{-}(X_{s})ds \right] \label{}
	\end{align} 
	for non-negative measurable function $f$.
	Thus, it suffices to show $\mu_{n} / n! \leq \eps^{n}$ for some $\eps \in (0,1)$, which is known to be Khas'minskii's condition (see \citet[p.120]{FitzsimmonsPitman-KacMomentFormula}).
	By a potential formula (see e.g., \citet[Lemma 33.10]{Kallenberg-third}), the operator $G_{\delta_{-}}$ is represented as an integral operator:
	\begin{align}
		G_{\delta_{-}}f(x) &= \int_{0}^{\infty} f(y)\delta_{-}(y)s^{[\alpha]}(x \vee y,\infty)dm^{[\alpha]}(y) \label{} \label{} \\
		&= \int_{x}^{\infty}ds^{[\alpha]}\int_{0}^{y}f(z)\delta_{-}(z)dm^{[\alpha]}(z), \label{}
	\end{align}
	where we used Fubini's theorem in the second equality.
	Then we have from the assumption \eqref{eq33} that
	\begin{align}
		G_{\delta_{-}}1(x) \leq G_{\delta_{-}}1(0) < 1. \label{}
	\end{align}
	We inductively see that
	\begin{align}
		G^{n}_{\delta_{-}}1(x) &= \int_{x}^{\infty}ds^{[\alpha]}\int_{0}^{y}(G^{n-1}_{\delta_{-}}1)(z)\delta_{-}(z)dm^{[\alpha]}(z) \label{} \\
		&\leq (G_{\delta_{-}}1(0))^{n-1} \int_{x}^{\infty}ds^{[\alpha]}\int_{0}^{y}\delta_{-}(z)dm^{[\alpha]}(z) \label{} \\
		&\leq (G_{\delta_{-}}1(0))^{n}. \label{}
	\end{align}
	The proof is complete.
\end{proof}

Roughly speaking, the assumptions in Theorem \ref{thm:mutualAbsConti} are satisfied when the difference $|\cL\alpha/\alpha(x)- \cL\beta/\beta(x)|$ vanishes sufficiently fast as $x$ goes to infinity.
When $\alpha = \psi_{-\lambda}$, and $\beta = \psi_{-\mu}$ for $\lambda \neq \mu \ (\lambda,\mu \in (0,\lambda_{0}])$, this is not the case; $|\cL\psi_{-\lambda}/\psi_{-\lambda}(x)- \cL\psi_{-\mu}/\psi_{-\mu}(x)| \equiv |\lambda - \mu| > 0$.
Conversely, in this case, we show that the distributions $\bP_{x}^{[\psi_{-\lambda}]}$ and $\bP_{x}^{[\psi_{-\mu}]}$ are singular on $\cF_{\infty}$ under the \textit{tail triviality}.
Let $\cT := \bigcap_{t > 0} \sigma \left(\bigcup_{s > t} \cF_{s}\right)$, the tail $\sigma$-field of $\cF_{t}$.
For $\rho \in \cC$, the $\sigma$-field $\cT$ is said to be trivial under $\bP^{[\rho]}_{x}$ when 
\begin{align}
	\bP_{x}^{[\rho]}[A] \in  \{0,1\} \quad \text{for every $A \in \cT$}. \label{tailTrivial}
\end{align}

Several necessary and sufficient conditions for the tail triviality of one-dimensional diffusions have been established in \cite{Rosler-tailTrivial}, \cite{FristedtOrey-tailTrivial} and \cite{Rogers-tailTrivial}.
Specializing their results in our situation, we have the following.

\begin{Thm}[{\citet[Theorem 2.2]{Rosler-tailTrivial}, \citet[Theorem 3]{Rogers-tailTrivial}}] \label{thm:tailTrivialNeceSuff}
	For $\rho \in \cC$, the tail triviality \eqref{tailTrivial} holds if and only if either of the following holds:
	\begin{enumerate}
		\item $s^{[\rho]}(1,\infty) = \infty$.
		\item $s^{[\rho]}(1,\infty) < \infty$ and 
		\begin{align}
			\int_{1}^{\infty}s^{[\rho]}(x,\infty)m^{[\rho]}(0,x)^2 ds^{[\rho]}(x) = \infty. \label{eq23}
		\end{align}
	\end{enumerate}
\end{Thm}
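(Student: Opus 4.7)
The plan is to deduce this as a direct specialization of the general tail-triviality criteria of \citet{Rosler-tailTrivial} and \citet{Rogers-tailTrivial} to the $L^{[\rho]}$-diffusion. Recall from Proposition \ref{prop:BoundaryClassification} that under $\bP^{[\rho]}_x$ the boundary $0$ is entrance and $\infty$ is natural, so $X$ lives on $(0,\infty)$ for all positive times, and its scale function and speed measure are precisely $s^{[\rho]}$ and $m^{[\rho]}$. Hence it suffices to verify that the hypotheses of the cited theorems match our setup and to treat the two mutually exclusive regimes separately.

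First I would handle the case $s^{[\rho]}(1,\infty) = \infty$. In this regime, the scale function $s^{[\rho]}$ diverges at both boundaries (since $s^{[\rho]}(0,1) = \infty$ follows from $0$ being entrance for $L^{[\rho]}$, which places $s^{[\rho]}$ on the usual ``$+\infty$ at $0$'' side when one reverts to a natural-scale parameterization on $(0,\infty)$). Consequently the $L^{[\rho]}$-diffusion is recurrent, and Rösler's theorem (or equivalently a direct strong-Markov argument at successive hitting times of a fixed reference point, which yields an i.i.d.\ decomposition of the path whose tail is trivial by Kolmogorov's zero-one law) gives that $\cT$ is trivial under $\bP^{[\rho]}_x$.

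Next I would treat the case $s^{[\rho]}(1,\infty) < \infty$. Here the process is transient and drifts to $\infty$ under $\bP^{[\rho]}_x$. Rogers' criterion \citet[Theorem 3]{Rogers-tailTrivial}, which characterizes tail triviality for one-dimensional transient diffusions exactly in terms of the divergence of the integral $\int^\infty s^{[\rho]}(x,\infty)\,m^{[\rho]}(0,x)^2\,ds^{[\rho]}(x)$, then yields the equivalence claimed in (ii). The converse direction (that failure of \eqref{eq23} implies a non-trivial tail event) is obtained via the same reference by constructing a bounded non-trivial $L^{[\rho]}$-harmonic function on $(0,\infty)$, whose limit along the paths generates a non-trivial tail random variable.

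The only genuine care needed is in matching conventions: the references of Rösler and Rogers are typically stated for a diffusion on an open interval with specific boundary behaviors, whereas our process has an entrance boundary at $0$. This is harmless because an entrance boundary is never hit from the interior, so the tail $\sigma$-field of the canonical process on $[0,\infty)$ coincides with that of the restricted diffusion on $(0,\infty)$; the integrals in the Rösler--Rogers criteria are insensitive to the behavior of $s^{[\rho]}$ and $m^{[\rho]}$ near $0$ (they involve integrands of the form $s^{[\rho]}(x,\infty)$ with integration over $x \in [1,\infty)$), so the hypothesis of their theorems is satisfied. This translation step is the only potentially delicate point, but once it is settled the statement follows directly from the cited results.
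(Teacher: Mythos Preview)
The paper does not supply its own proof of this theorem: it is stated purely as a citation of \citet{Rosler-tailTrivial} and \citet{Rogers-tailTrivial}, with the sentence ``Specializing their results in our situation, we have the following'' as the only justification. Your proposal is therefore already more detailed than what the paper offers, and the approach---reduce to the cited criteria after noting that under $\bP^{[\rho]}_x$ the diffusion has scale $s^{[\rho]}$, speed $m^{[\rho]}$, entrance boundary at $0$, and natural boundary at $\infty$ (Proposition~\ref{prop:BoundaryClassification})---is exactly the intended one. Your case split (recurrent when $s^{[\rho]}(1,\infty)=\infty$, transient to $\infty$ otherwise) and the remark that the entrance boundary at $0$ is harmless for matching conventions are both correct; the claim $s^{[\rho]}(0,1)=\infty$ follows directly from $\rho(x)\sim\rho^{+}(0)x$ near $0$, so no appeal to a reparametrization is needed there.
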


We present a sufficient condition for the tail triviality in terms of the speed measure $dm$ and the scale function $s(x) = x$ of the original process.

\begin{Prop} \label{prop:tailTrivialSuff}
	Let $\rho \in \cC$ and suppose $s^{[\rho]}(1,\infty) < \infty$.
	If 
	\begin{align}
		\int_{1}^{\infty}\left( \int_{1}^{x}y^{2}dm(y) \right)^{2}\frac{dx}{x^{3}} = \infty, \label{}
	\end{align}
	then \eqref{eq23} holds.
	In particular, 
	\begin{align}
		\liminf_{x \to \infty} \frac{1}{x}\int_{1}^{x}y^{2}dm(y) > 0 \label{}
	\end{align}
	is a sufficient condition for \eqref{eq23}.
\end{Prop}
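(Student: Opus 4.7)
The plan is to reduce the condition \eqref{eq23}, which is written in terms of $s^{[\rho]}$ and $m^{[\rho]}$, to a condition involving only the original scale $s(y)=y$ and speed measure $dm$, by exploiting the two–sided monotonicity of $y \mapsto \rho(y)/y$ that comes from concavity of $\rho$ together with $\rho(0)=0$ (see \eqref{concavity}). The key point is that the $\rho$-factors from the numerator and from $dm^{[\rho]}$, $ds^{[\rho]}$ cancel in the right way.

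First I would fix $x \geq 1$ and derive two elementary bounds. Since $\rho(y)/y$ is non-increasing, for $0 < y \leq x$ we have $\rho(y)/y \geq \rho(x)/x$, hence
\begin{align}
m^{[\rho]}(0,x) = \int_0^x \rho(y)^2\, dm(y) \geq \frac{\rho(x)^2}{x^2}\int_1^x y^2\, dm(y).
\end{align}
Likewise, for $y \geq x$ we have $\rho(y)/y \leq \rho(x)/x$, so
\begin{align}
s^{[\rho]}(x,\infty) = \int_x^\infty \frac{dy}{\rho(y)^2} \geq \frac{x^2}{\rho(x)^2}\int_x^\infty \frac{dy}{y^2} = \frac{x}{\rho(x)^2}.
\end{align}

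Next I multiply these estimates together with $ds^{[\rho]}(x) = \rho(x)^{-2}dx$. A direct calculation shows that every occurrence of $\rho(x)$ cancels:
\begin{align}
s^{[\rho]}(x,\infty)\,m^{[\rho]}(0,x)^2\,ds^{[\rho]}(x)
\geq \frac{x}{\rho(x)^2}\cdot \frac{\rho(x)^4}{x^4}\left(\int_1^x y^2\,dm(y)\right)^2\cdot \frac{dx}{\rho(x)^2}
= \frac{1}{x^3}\left(\int_1^x y^2\,dm(y)\right)^2 dx.
\end{align}
Integrating over $[1,\infty)$, the hypothesis
$\int_1^\infty x^{-3}\bigl(\int_1^x y^2\,dm(y)\bigr)^2 dx = \infty$
implies \eqref{eq23}, which is the first claim.

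For the ``in particular'' statement, suppose $\liminf_{x\to\infty} x^{-1}\int_1^x y^2\,dm(y) = c > 0$. Then for all sufficiently large $x$ we have $\int_1^x y^2\,dm(y) \geq cx/2$, so $x^{-3}\bigl(\int_1^x y^2\,dm(y)\bigr)^2 \geq c^2/(4x)$, whose integral over $[X,\infty)$ diverges; hence the previous sufficient condition holds. There is no real obstacle here—the only subtlety is making sure the two directions of the inequality $\rho(y)/y \gtrless \rho(x)/x$ are used on the correct intervals $(0,x)$ and $(x,\infty)$, which is exactly what produces the cancellation of $\rho$.
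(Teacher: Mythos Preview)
Your proof is correct and follows essentially the same approach as the paper: both exploit the monotonicity of $y\mapsto \rho(y)/y$ to bound $s^{[\rho]}(x,\infty)$ from below by $x/\rho(x)^2$ and $m^{[\rho]}$ from below by $(\rho(x)^2/x^2)\int_1^x y^2\,dm(y)$, after which the $\rho$-factors cancel and the integrand in \eqref{eq23} dominates $x^{-3}\bigl(\int_1^x y^2\,dm(y)\bigr)^2$. The only cosmetic difference is that the paper groups the factors as the two products $s^{[\rho]}(x,\infty)m^{[\rho]}(1,x)$ and $\rho(x)^{-2}m^{[\rho]}(1,x)$ before multiplying, while you multiply all three pieces at once; the content is identical.
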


\begin{proof}
	By the concavity of $\rho$, we have $\rho(z)/z \leq \rho(y) / y$ for $z \geq y$.
	It follows
	\begin{align}
		s^{[\rho]}(x,\infty)m^{[\rho]}(1,x) 
		= \int_{x}^{\infty} \frac{dz}{\rho(z)^{2}} \int_{1}^{x}\rho(y)^{2}dm(y) \geq \frac{1}{x}\int_{1}^{x} y^{2}dm(y).  \label{}
	\end{align}
	Similarly, we see that
	\begin{align}
		\frac{1}{\rho(x)^{2}}m^{[\rho]}(1,x) \geq \frac{1}{x^{2}}\int_{1}^{x}y^{2}dm(y). \label{}
	\end{align}
	Now the desired result is clear.
\end{proof}

The following theorem shows that mutual singularity on $\cF_{\infty}$ follows under the tail triviality.
Although its proof is straightforward, the importance lies in the fact that without this assumption, singularity may fail, while mutual absolute continuity can hold instead, as will be seen in the examples in Section \ref{section:examples}.

\begin{Thm} \label{thm:singular}
	Assume $\infty$ is natural for $L$ and $\lambda_{0} > 0$.
	For $\mu, \lambda \in (0,\lambda_{0}]$ with $\lambda \neq \mu$ and $x > 0$,
	suppose the tail $\sigma$-field $\cT$ is trivial under $\bP_{x}^{[\psi_{-\lambda}]}$ and $\bP_{x}^{[\psi_{-\mu}]}$. 
	Then the distributions $\bP_{x}^{[\psi_{-\lambda}]}$ and $\bP_{x}^{[\psi_{-\mu}]}$ are singular on $\cF_{\infty}$.
	More explicitly, there exist some constants $0 \leq c_{1} < 1 < c_{2} \leq \infty$ such that
	\begin{align}
		\bP_{x}^{[\psi_{-\lambda}]}\left[ \lim_{t \to \infty} \frac{M^{[\psi_{-\mu}]}_{t}}{M^{[\psi_{-\lambda}]}_{t}} = c_{1} \right] =
		\bP_{x}^{[\psi_{-\mu}]}\left[ \lim_{t \to \infty} \frac{M^{[\psi_{-\mu}]}_{t}}{M^{[\psi_{-\lambda}]}_{t}} = c_{2} \right] = 1. \label{}
	\end{align}
\end{Thm}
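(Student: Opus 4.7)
The plan is to analyze the likelihood ratio $R_t := M^{[\psi_{-\mu}]}_t / M^{[\psi_{-\lambda}]}_t$ under each of the two measures. By the absolute continuity relation \eqref{absoluteContinuityRelation}, on every $\cF_t$ the density of $\bP^{[\psi_{-\mu}]}_x$ with respect to $\bP^{[\psi_{-\lambda}]}_x$ is precisely $R_t$; in particular $R_t$ is well defined $\bP^{[\psi_{-\lambda}]}_x$-a.s.\ and is a non-negative $(\cF_t)$-martingale under $\bP^{[\psi_{-\lambda}]}_x$ with $R_0 = 1$. The martingale convergence theorem gives $R_t \to R_\infty$ $\bP^{[\psi_{-\lambda}]}_x$-a.s., and since $R_\infty$ is the almost sure limit of $\cF_t$-measurable variables, $R_\infty \in \cT$. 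The tail triviality assumption then forces $R_\infty$ to be an almost sure constant $c_1 \in [0,\infty)$, and Fatou yields $c_1 \leq 1$. The same reasoning applied to the $\bP^{[\psi_{-\mu}]}_x$-martingale $1/R_t$ produces a constant $1/c_2 \in [0,1]$ as the $\bP^{[\psi_{-\mu}]}_x$-a.s.\ limit, with $c_2 \in [1,\infty]$. The two limits may disagree, since $\bP^{[\psi_{-\lambda}]}_x$ and $\bP^{[\psi_{-\mu}]}_x$ are possibly singular.

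The crux is upgrading these inequalities to the strict bounds $c_1 < 1 < c_2$. Suppose $c_1 = 1$. Because $R_t \geq 0$, we have $(1 - R_t)^+ \leq 1$, so the dominated convergence theorem applied under $\bP^{[\psi_{-\lambda}]}_x$ gives $\bE^{[\psi_{-\lambda}]}_x[(1-R_t)^+] \to 0$. Combined with $\bE^{[\psi_{-\lambda}]}_x[R_t - 1] = 0$, this forces $\bE^{[\psi_{-\lambda}]}_x|R_t - 1| \to 0$, so $(R_t)$ is uniformly integrable and consequently $R_t = \bE^{[\psi_{-\lambda}]}_x[R_\infty \mid \cF_t] = 1$ for every $t$. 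Translating back through \eqref{absoluteContinuityRelation}, $M^{[\psi_{-\mu}]}_t = M^{[\psi_{-\lambda}]}_t$ $\bP_x$-a.s.\ on each $\cF_t$. Proposition \ref{prop:injectivity} then yields $\psi_{-\mu} = c\, \psi_{-\lambda}$ for some $c > 0$; comparing right-derivatives at the origin (both equal to $1$ by \eqref{eq12}) forces $c = 1$, which contradicts the strict monotonicity of $\beta \mapsto \psi_\beta$ from Proposition \ref{prop:monotonicityOfPsi}. The symmetric argument under $\bP^{[\psi_{-\mu}]}_x$ gives $c_2 > 1$.

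Singularity is then immediate: the event $\{\lim_{t \to \infty} R_t = c_1\}$ belongs to $\cF_\infty$, has $\bP^{[\psi_{-\lambda}]}_x$-probability one, and has $\bP^{[\psi_{-\mu}]}_x$-probability zero because under $\bP^{[\psi_{-\mu}]}_x$ the same limit almost surely equals $c_2 \neq c_1$. I expect the main obstacle to be the uniform-integrability step: Fatou and tail triviality alone only deliver the weak inequalities $c_1 \leq 1 \leq c_2$, and the passage to strict inequality is what requires both the Scheffé-type $L^1$ upgrade and the identifiability of the map $\rho \mapsto M^{[\rho]}$ modulo multiplicative constants supplied by Proposition \ref{prop:injectivity}. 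Once these two ingredients are in place the rest of the argument unfolds automatically.
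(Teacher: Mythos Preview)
Your argument follows the same overall structure as the paper's proof, but there is one genuine gap. You assert that $R_\infty \in \cT$ ``since $R_\infty$ is the almost sure limit of $\cF_t$-measurable variables''. This reasoning is incorrect: an almost-sure limit of $\cF_t$-measurable random variables lies in $\cF_\infty$, not necessarily in the tail $\sigma$-field $\cT$. (For a trivial counterexample, any $\cF_1$-measurable random variable is the limit of the constant sequence equal to itself, yet it is generally not tail-measurable.) The paper closes this gap by invoking the explicit formula
\[
R_t \;=\; \frac{M^{[\psi_{-\mu}]}_t}{M^{[\psi_{-\lambda}]}_t} \;=\; \mathrm{e}^{(\mu-\lambda)t}\,\frac{\psi_{-\lambda}(x)}{\psi_{-\mu}(x)}\,\frac{\psi_{-\mu}(X_t)}{\psi_{-\lambda}(X_t)},
\]
which shows that $R_t$ is $\sigma(X_t)$-measurable. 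Hence for every $s>0$ the limit $R_\infty = \lim_{t>s} R_t$ is $\sigma(X_u:u\geq s)$-measurable, and therefore $R_\infty \in \cT$. With this correction in place your proof is complete and essentially identical to the paper's; your Scheff\'e-type $L^1$ argument and appeal to Proposition~\ref{prop:injectivity} simply spell out what the paper abbreviates as ``obviously impossible''.
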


\begin{proof}
	Since $M^{[\psi_{-\mu}]}_{t} / M^{[\psi_{-\lambda}]}_{t}$ is a non-negative martingale under $\bP_{x}^{[\psi_{-\lambda}]}$, it converges almost surely as $t \to \infty$.
	Since we have
	\begin{align}
		\frac{M^{[\psi_{-\mu}]}_{t}}{M^{[\psi_{-\lambda}]}_{t}} = \mathrm{e}^{(\mu-\lambda)t} \frac{\psi_{-\lambda}(x)}{\psi_{-\mu}(x)} \frac{\psi_{-\mu}(X_{t})}{\psi_{-\lambda}(X_{t})} \quad \bP_{x}^{[\psi_{-\lambda}]}\text{-a.s.}, \label{}
	\end{align}
	limit $\lim_{t \to \infty}M^{[\psi_{-\mu}]}_{t} / M^{[\psi_{-\lambda}]}_{t}$ is $\cT$-measurable.
	From the triviality of $\cT$ under $\bP_{x}^{[\psi_{-\lambda}]}$, the limit is almost surely a constant, which we denote by $c_{1}$.
	By Fatou's lemma $c_{1} \leq M^{[\psi_{-\mu}]}_{0} / M^{[\psi_{-\lambda}]}_{0} = 1$.
	If $c_{1} = 1$, the martingale $M^{[\psi_{-\mu}]}_{t} / M^{[\psi_{-\lambda}]}_{t}$ is uniformly integrable, and it implies $M^{[\psi_{-\mu}]}_{t} / M^{[\psi_{-\lambda}]}_{t} = 1, \ \bP_{x}^{[\psi_{-\lambda}]}$-a.s. for every $t>0$.
	This is obviously impossible and thus $c_{1} < 1$.
	The same consideration for the martingale $M^{[\psi_{-\lambda}]}_{t} / M^{[\psi_{-\mu}]}_{t}$ under $\bP_{x}^{[\psi_{-\mu}]}$ shows 
	\begin{align}
		\bP_{x}^{[\psi_{-\mu}]}\left[ \lim_{t \to \infty} \frac{M^{[\psi_{-\lambda}]}_{t}}{M^{[\psi_{-\mu}]}_{t}} = \frac{1}{c_{2}} \right] = 1 \label{}
	\end{align}
	for some $c_{2} \in (1,\infty]$.
\end{proof}

We consider the case where $\alpha \in \cC$ satisfies $s^{[\alpha]}(1,\infty) = \infty$.
We always have singularity on $\cF_{\infty}$ in this situation.

\begin{Thm}
	Let $\alpha, \beta \in \cC$ with $\alpha \ne \beta$.
	Suppose $s^{[\alpha]}(1,\infty) = \infty$.
	Then $\bP_{x}^{[\alpha]}$ and $\bP_{x}^{[\beta]}$ are singular on $\cF_{\infty}$ for every $x > 0$.
\end{Thm}

\begin{proof}
	Note that from $s^{[\alpha]}(1,\infty) = \infty$, the $L^{[\alpha]}$-diffusion is recurrent.

	If $s^{[\beta]}(1,\infty) < \infty$, the $L^{[\beta]}$-diffusion is transient to $\infty$, and we immediately have singularity on $\cF_{\infty}$ from
	\[
	1 - \bP_{x}^{[\alpha]}[T_{\infty} < \infty] = \bP_{x}^{[\beta]}[T_{\infty} < \infty] = 1,
	\]
	where $T_{\infty} := \lim_{x \to \infty}T_{x}$.

	Let $s^{[\beta]}(1,\infty) = \infty$.
	We may assume without loss of generality 
	\begin{align}
		\lim_{x \to 0}\alpha^{+}(x) = \lim_{x \to 0}\beta^{+}(x). \label{eq63}
	\end{align}
	Since $\alpha$ and $\beta$ are continuous, there exists a finite interval $I$ and $\eps > 0$ such that $|\alpha(x) - \beta(x)| > \eps \ (x \in I)$.
	From \eqref{eq63}, we can take a sufficiently small $\delta > 0$ so that
	\[
		\frac{\int_{I}\alpha(x)^{2}dm(x)}{\int_{0}^{\delta}\alpha(x)^{2}dm(x)} \ne \frac{\int_{I}\beta(x)^{2}dm(x)}{\int_{0}^{\delta}\beta(x)^{2}dm(x)}.
	\]
	From the ratio ergodic theorem for recurrent diffusions (see e.g., \cite[Chapter 6.8]{Ito-McKean} and \cite[Theorem 33.14]{Kallenberg-third}),
	we have for $\gamma = \alpha,\beta$
	\[
	\lim_{t \to \infty}\frac{\int_{0}^{t}1_{I}(X_{s})ds}{\int_{0}^{t}1_{(0,\delta]}(X_{s})ds} = \frac{\int_{I}\gamma(x)^{2}dm(x)}{\int_{0}^{\delta}\gamma(x)^{2}dm(x)} \quad \bP_{x}^{[\gamma]}\text{-a.s.},
	\]
	which shows the desired singularity.
\end{proof}

\section{Examples} \label{section:examples}

We consider the results discussed above in two examples.

\subsection{Brownian motion with a negative drift}

For $c > 0 $, let us consider a Brownian motion with a constant negative drift killed at $0$, that is,
\[
X_{t} := B_{t} - ct \quad (t < T_{0}).
\]
Its local generator on $(0,\infty)$ is
\begin{align}
	\cL = \frac{1}{2}\frac{d^{2}}{dx^{2}} - c \frac{d}{dx}, \label{eq40}
\end{align}
and its speed measure and scale function are given by
\begin{align}
	dm(x) = 2 \mathrm{e}^{-2cx} dx, \quad ds(x) = \mathrm{e}^{2cx}dx , \label{} 
\end{align}
respectively.
The boundary $0$ is regular and the boundary $\infty$ is natural.
The bottom of the spectrum $\lambda_{0}$ of $L^{2}(m)$ is $c^{2}/2$.
An elementary calculus shows that the eigenfunction $\psi_{q}$ in \eqref{eq12} is
\begin{align}
	\psi_{q}(x) =
	\begin{cases}
		\frac{\mathrm{e}^{cx}}{\sqrt{c^{2} + 2q}} \sinh (\sqrt{c^{2}+ 2q}x) & (q \neq -c^{2}/2) \\
		x \mathrm{e}^{cx} & (q = -c^{2}/ 2)
	\end{cases} 
	. \label{eq39}
\end{align}
We thus have for $\lambda \in (0,c^{2}/2)$
\begin{align}
	dm^{[\psi_{-\lambda}]}(x) = \frac{2 \sinh (\sqrt{c^{2}- 2\lambda}x)^{2}}{c^{2}-2\lambda}dx , \quad ds^{[\psi_{-\lambda}]}(x) = \frac{c^{2}-2\lambda}{\sinh (\sqrt{c^{2}- 2\lambda}x)^{2}}dx \label{}
\end{align}
and 
\[
dm^{[\psi_{-c^{2}/2}]}(x) = 2x^{2} dx , \quad ds^{[\psi_{-c^{2}/2}]}(x) = \frac{dx}{x^{2}}.
\]
It follows
\begin{align}
	\frac{d}{dm^{[\psi_{-\lambda}]}}\frac{d}{ds^{[\psi_{-\lambda}]}} = 
	\begin{cases}
		\frac{1}{2}\frac{d^{2}}{dx^{2}} + \sqrt{c^{2}-2\lambda} \coth (\sqrt{c^{2}-2\lambda}x)\frac{d}{dx} & (\lambda \in (0,c^{2}/2)) \\
		\frac{1}{2}\frac{d^{2}}{dx^{2}} + \frac{1}{x}\frac{d}{dx} & (\lambda = c^{2}/2)
	\end{cases}
	. \label{}
\end{align}
Thus, the process $X$ is a hyperbolic Bessel process under $\bP_{x}^{[\psi_{-\lambda}]} \ (\lambda \in (0,c^{2}/2))$ and Bessel process of dimension three under $\bP_{x}^{[\psi_{-c^{2}/2}]}$ (see e.g., \citet[Chapter VIII.3]{RevuzYor}).
Note that $X$ is transient under $\bP_{x}^{[\psi_{-c^{2}/2}]}$.
The process $X$ satisfies the tail triviality under $\bP_{x}^{[\rho]}$ for every $\rho \in \cC$ since
\begin{align}
	\frac{1}{s(0,x)}\int_{1}^{x}s(0,y)^{2}dm(y) = \frac{1}{c(\mathrm{e}^{2cx}-1)} \int_{1}^{x}(\mathrm{e}^{2cy}-1)^{2}\mathrm{e}^{-2cy}dy \xrightarrow[]{x \to \infty} \frac{1}{2c^{2}}, \label{}
\end{align}
and we can apply Proposition \ref{prop:tailTrivialSuff}.

We show that the constant $c_{1}$ in Theorem \ref{thm:singular} is always zero for this process.

\begin{Thm} \label{thm:TailConstantBM}
	For $\lambda, \mu \in (0,c^{2}/2]$ with $\lambda \neq \mu$ and $x \geq 0$, we have
	\begin{align}
		\bP^{[\psi_{-\lambda}]}_{x}\left[ \lim_{t \to \infty} \frac{M_{t}^{[\psi_{-\mu}]}}{M_{t}^{[\psi_{-\lambda}]}} = 0 \right] = 1. \label{}
	\end{align}
\end{Thm}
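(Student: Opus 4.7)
The plan is to start from the pathwise identity derived in the proof of Theorem \ref{thm:singular},
\begin{align}
\frac{M^{[\psi_{-\mu}]}_{t}}{M^{[\psi_{-\lambda}]}_{t}} = \mathrm{e}^{(\mu-\lambda)t}\,\frac{\psi_{-\lambda}(x)}{\psi_{-\mu}(x)}\,\frac{\psi_{-\mu}(X_{t})}{\psi_{-\lambda}(X_{t})} \quad \bP^{[\psi_{-\lambda}]}_{x}\text{-a.s.}, \nonumber
\end{align}
and to analyze the dominant exponent as $t \to \infty$ using the explicit forms \eqref{eq39} together with the identification of $\bP^{[\psi_{-\lambda}]}_{x}$ as the law of a hyperbolic Bessel process of parameter $a := \sqrt{c^{2}-2\lambda}$ for $\lambda < c^{2}/2$, or the three-dimensional Bessel process for $\lambda = c^{2}/2$. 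It then suffices to show $\mathrm{e}^{(\mu-\lambda)t}\psi_{-\mu}(X_{t})/\psi_{-\lambda}(X_{t}) \to 0$ almost surely.

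The crucial input is the almost sure growth rate of $X_{t}$ under these laws. For the hyperbolic Bessel process the drift $a\coth(ax)$ decreases monotonically from $\infty$ to $a$, so a sandwich using Brownian motions of drift $a$ (from below, via $\coth \ge 1$) and $a(1+\varepsilon)$ (from above, applied after the transient process passes the level at which $\coth(ax) \le 1+\varepsilon$) yields $X_{t}/t \to a$ almost surely. For the three-dimensional Bessel process the classical law of the iterated logarithm gives $X_{t} = O(\sqrt{t\log\log t})$ almost surely.

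The computation then splits according to whether $a$ or $b:=\sqrt{c^{2}-2\mu}$ vanishes. When both $\lambda,\mu<c^{2}/2$, the asymptotic $\sinh(rx)\sim \mathrm{e}^{rx}/2$ reduces the claim to the behavior of the exponent $(\mu-\lambda)t+(b-a)X_{t}$, and the algebraic identity
\begin{align}
(\mu-\lambda) + a(b-a) = \frac{(a-b)(a+b)}{2} - a(a-b) = -\frac{(a-b)^{2}}{2} \nonumber
\end{align}
combined with $X_{t}/t\to a$ collapses this to $-(a-b)^{2}t/2 \to -\infty$, giving the claim. In the mixed case $\lambda=c^{2}/2$ (so $a=0$) one uses $\psi_{-c^{2}/2}(X_{t})=X_{t}\mathrm{e}^{cX_{t}}$ directly; the linear term $(\mu-c^{2}/2)t = -b^{2}t/2$ dominates the $O(\sqrt{t\log\log t})$ contribution from $bX_{t}$. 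The symmetric mixed case $\mu=c^{2}/2$ is analogous: with $X_{t}\sim at$ the exponent $(c^{2}/2-\lambda)t - aX_{t}\sim -a^{2}t/2$ also diverges to $-\infty$, and the polynomial factor $X_{t}$ in front is swamped by the exponential decay. The main obstacle is cleanly justifying $X_{t}/t\to a$ for the hyperbolic Bessel process; once this and the LIL for the three-dimensional Bessel process are in place, the rest is elementary exponential bookkeeping driven by the algebraic collapse of the exponent.
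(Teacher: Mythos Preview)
Your argument is correct and takes a genuinely different route from the paper. The paper evaluates the martingale ratio along the subsequence of hitting times $T_{y}$: since $W_{t} := M^{[\psi_{-\mu}]}_{t}/M^{[\psi_{-\lambda}]}_{t}$ already converges $\bP^{[\psi_{-\lambda}]}_{x}$-a.s., it suffices to identify $\lim_{y\to\infty} W_{T_{y}}$, and the key input is Lemma~\ref{lem:AsympOfHittingTimeBM} (proved via the exact Laplace transform of Proposition~\ref{prop:HittingTimeUnderEntrance}), namely $T_{y}/y \to 1/a$ in probability in the hyperbolic Bessel case and the scaling identity for $T_{y}/y^{2}$ in the Bessel$(3)$ case. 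You instead work directly in $t$, establishing the almost sure growth $X_{t}/t \to a$ (respectively $X_{t}=O(\sqrt{t\log\log t})$) and tracking the exponent; both approaches ultimately hinge on the same cancellation $(\mu-\lambda)+a(b-a)=-(a-b)^{2}/2$. The paper's route is slicker here because the Laplace transform of $T_{y}$ is explicit. Your upper sandwich is slightly imprecise as written---after first reaching the level where $\coth(ax)\le 1+\eps$ the process may dip back to where the drift exceeds $a(1+\eps)$---but this is easily repaired, e.g.\ by observing that $\coth(aX_{s})\to 1$ a.s.\ forces its Ces\`aro average to $1$, whence $X_{t}/t = (B_{t}-B_{0})/t + \tfrac{1}{t}\int_{0}^{t} a\coth(aX_{s})\,ds \to a$. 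In return your approach is more robust: it uses only the asymptotics of $\psi_{-\lambda}$ and a law of large numbers for $X_{t}$, and would transfer to situations where hitting-time Laplace transforms are not available in closed form.
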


We use the following lemma.

\begin{Lem} \label{lem:AsympOfHittingTimeBM}
	For $\lambda \in (0,c^{2}/2)$, we have
	\begin{align}
		\frac{T_{y}}{y} \xrightarrow[y \to \infty]{P} \frac{1}{\sqrt{c^{2}-2\lambda}} \quad \text{under } \bP_{0}^{[\psi_{-\lambda}]}. \label{}
	\end{align}
	Under $\bP_{0}^{[\psi_{-c^{2}/2}]}$, the distribution of $T_{y}/y^{2}$ does not depend on $y$ and satisfies
	\begin{align}
		\bE_{0}^{[\psi_{-c^{2}/2}]}[\mathrm{e}^{-q T_{y}/y^{2}}] = \frac{\sqrt{2\beta}}{\sinh \sqrt{2q}} \quad (q \geq 0). \label{eq41}
	\end{align}
\end{Lem}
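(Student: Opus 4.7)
\textbf{Proof proposal for Lemma \ref{lem:AsympOfHittingTimeBM}.}
The plan is to compute the Laplace transform of $T_y$ under $\bP_0^{[\psi_{-\lambda}]}$ explicitly using Proposition \ref{prop:HittingTimeUnderEntrance} and the closed form of $\psi_\beta$ given in \eqref{eq39}, and then read off the limit laws.

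First I would take $x \downarrow 0$ in Proposition \ref{prop:HittingTimeUnderEntrance}. Since $\psi_\beta(x)/x \to 1$ as $x \to 0$ by the integral equation \eqref{eq12}, the ratio $\psi_{\beta-\lambda}(x)/\psi_{-\lambda}(x) \to 1$, which yields
\begin{align}
\bE_0^{[\psi_{-\lambda}]}\!\left[e^{-\beta T_y}\right]
= \frac{\psi_{-\lambda}(y)}{\psi_{\beta-\lambda}(y)} \qquad (\beta \geq 0,\ y > 0).
\end{align}
Inserting the explicit form of $\psi_\beta$ from \eqref{eq39}, for $\lambda \in (0,c^2/2)$ the exponential factors $e^{cy}$ cancel and I obtain
\begin{align}
\bE_0^{[\psi_{-\lambda}]}\!\left[e^{-\beta T_y}\right]
= \frac{\sqrt{c^2-2\lambda+2\beta}}{\sqrt{c^2-2\lambda}}\,\frac{\sinh(\sqrt{c^2-2\lambda}\,y)}{\sinh(\sqrt{c^2-2\lambda+2\beta}\,y)}.
\end{align}

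For the first assertion, I replace $\beta$ by $\beta/y$ to get the Laplace transform of $T_y/y$. A Taylor expansion gives
\begin{align}
\sqrt{c^2-2\lambda+2\beta/y}\cdot y = y\sqrt{c^2-2\lambda} + \frac{\beta}{\sqrt{c^2-2\lambda}} + O(1/y),
\end{align}
and since $\sinh(a y) \sim \tfrac{1}{2}e^{ay}$ as $y \to \infty$, the ratio of $\sinh$'s tends to $e^{-\beta/\sqrt{c^2-2\lambda}}$, while the prefactor tends to $1$. Thus $\bE_0^{[\psi_{-\lambda}]}[e^{-\beta T_y/y}] \to e^{-\beta/\sqrt{c^2-2\lambda}}$ for every $\beta \geq 0$. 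By Laplace transform convergence, $T_y/y$ converges in distribution to the constant $1/\sqrt{c^2-2\lambda}$, which for a constant limit is equivalent to convergence in probability.

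For the second assertion, I substitute $\lambda = c^2/2$, so that $\psi_{-c^2/2}(y) = y e^{cy}$ from \eqref{eq39}, and the same computation gives
\begin{align}
\bE_0^{[\psi_{-c^2/2}]}\!\left[e^{-\beta T_y}\right] = \frac{y\sqrt{2\beta}}{\sinh(\sqrt{2\beta}\,y)}.
\end{align}
Replacing $\beta$ by $\beta/y^2$ makes the right-hand side collapse to $\sqrt{2\beta}/\sinh\sqrt{2\beta}$ independently of $y$, which is exactly \eqref{eq41}. The independence of $y$ of the distribution of $T_y/y^2$ then follows since the Laplace transform determines the law.

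The computations are essentially mechanical; the only subtlety I foresee is the careful two-term expansion of $\sqrt{c^2-2\lambda+2\beta/y}\cdot y$ to pick out the additive constant $\beta/\sqrt{c^2-2\lambda}$ in the exponent, which is exactly what produces the correct Laplace transform limit.
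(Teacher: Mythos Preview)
Your proposal is correct and follows essentially the same route as the paper: both invoke Proposition \ref{prop:HittingTimeUnderEntrance} at $x=0$ together with the explicit form \eqref{eq39}, substitute $\beta \mapsto \beta/y$ (respectively $\beta \mapsto \beta/y^{2}$), and read off the limiting Laplace transform via the elementary asymptotics of $\sinh$. Your write-up is in fact slightly more detailed than the paper's, which omits the second-part computation as ``easily follows''.
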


\begin{proof}
	Let $\lambda,\mu < c^{2}/2$.
	Then it follows from \eqref{eq39} and Proposition \ref{prop:HittingTimeUnderEntrance}
	\begin{align}
		\bE_{0}^{[\psi_{-\lambda}]}[\mathrm{e}^{-q(T_{y}/y)}]
		&= \frac{\psi_{-\lambda}(y)}{\psi_{q/y-\lambda}(y)} \label{} \\
		&= \sqrt{\frac{c^{2}-2(\lambda-q/y)}{c^{2}-2\lambda}}\frac{\sinh (\sqrt{c^{2} -2\lambda}y)}{\sinh (\sqrt{c^{2}-2(\lambda - q/y)}y)} \label{} \\
		&\sim \exp \left( (\sqrt{c^{2} -2\lambda} - \sqrt{c^{2}-2(\lambda - q/y)})y \right) \label{} \\
		&\sim \mathrm{e}^{-q/\sqrt{c^{2}-2\lambda}} \quad (y \to \infty). \label{}
	\end{align}
	Thus, $T_{y}/y \xrightarrow[y \to \infty]{P} (c^{2}-2\lambda)^{-1/2}$ under $\bP_{0}^{[\psi_{-\lambda}]}$.
	The equality \eqref{eq41} also easily follows from \eqref{eq39} and Proposition \ref{prop:HittingTimeUnderEntrance}.
\end{proof}

We prove Theorem \ref{thm:TailConstantBM}.

\begin{proof} [Proof of Theorem \ref{thm:TailConstantBM}]
	It is enough to show the case $x = 0$.
	The general case follows from the strong Markov property at $T_{x}$.
	Noting that $\lim_{x \to 0} \psi_{-\mu}(x)/\psi_{-\lambda}(x) = 1$,
	we see that for every $t > 0$
	\begin{align}
		\bE_{0}^{[\psi_{-\lambda}]}\left[ \mathrm{e}^{(\mu-\lambda)t} \frac{\psi_{-\mu}(X_{t})}{\psi_{-\lambda}(X_{t})},A \right] = \bP_{0}^{[\psi_{-\mu}]} A \quad (A \in \cF_{t}). \label{}
	\end{align}
	Since the $\bP_{0}^{[\psi_{-\lambda}]}$-martingale $W_{t} := \mathrm{e}^{(\mu-\lambda)t} \psi_{-\mu}(X_{t}) / \psi_{-\lambda}(X_{t})$ converges almost surely as $t \to \infty$, we denote the limit random variable by $Z$.
	From \eqref{eq35}, we have $\bP_{0}^{[\psi_{-\lambda}]}[\lim_{y \to \infty} W_{T_{y}} = Z] = 1$.
	Thus, it suffices to identify the limit distribution of $W_{T_{y}}$ as $y \to \infty$.
	
	Let $\lambda, \mu < c^{2}/2$.	
	From \eqref{eq39}, we have as $y \to \infty$
	\begin{align}
		W_{T_{y}} &= \mathrm{e}^{(\mu - \lambda)T_{y}}\frac{\psi_{-\mu}(y)}{\psi_{-\lambda}(y)} \label{} \\
		&\sim \sqrt{\frac{c^{2}-2\lambda}{c^{2}-2\mu}} \exp\left((\mu - \lambda)T_{y} + (\sqrt{c^{2}-2\mu} - \sqrt{c^{2}-2\lambda})y  \right) \label{} \\
		&\sim \sqrt{\frac{c^{2}-2\lambda}{c^{2}-2\mu}} \exp\left((\mu - \lambda)\left(\frac{T_{y}}{y} - \frac{2}{\sqrt{c^{2}-2\mu} + \sqrt{c^{2}-2\lambda}}\right)y \right). \label{}
	\end{align}
	From Lemma \ref{lem:AsympOfHittingTimeBM}, we have under $\bP_{0}^{[\psi_{-\lambda}]}$ that
	\begin{align}
		(\mu - \lambda)\left(\frac{T_{y}}{y} - \frac{2}{\sqrt{c^{2}-2\mu} + \sqrt{c^{2}-2\lambda}}\right) \xrightarrow[y \to \infty]{P} -\infty. \label{}
	\end{align}
	Thus, we obtain $\bP_{0}^{[\psi_{-\lambda}]}[Z=0] = 1$.
	The case $\lambda < c^{2}/2$ and $\mu = c^{2}/2$ is almost the same, and we omit it.
	Consider the case $\lambda = c^{2}/2$ and $\mu < c^{2}/2$.
	We have as $y \to \infty$
	\begin{align}
		W_{T_{y}} &= \mathrm{e}^{(\mu - c^{2}/2)T_{y}}\frac{\psi_{-\mu}(y)}{\psi_{-c^{2}/2}(y)} \label{} \\
		&= \mathrm{e}^{(\mu-c^{2}/2)T_{y}} \frac{\sinh (\sqrt{c^{2}-2\mu}y)}{y\sqrt{c^{2}-2\mu}} \label{} \\
		&\sim \frac{1}{2y\sqrt{c^{2}-2\mu}}\exp\left(\left(\mu-\frac{c^{2}}{2}\right)T_{y} -\sqrt{c^{2}-2\mu}y\right) \label{} \\
		&= \frac{\mathrm{e}^{y^{2}/(2T_{y})}}{2y\sqrt{c^{2}-2\mu}}\exp\left(-\frac{T_{y}}{2}\left(\sqrt{c^{2}-2\mu}-\frac{y}{T_{y}}\right)^{2} \right). \label{}
	\end{align}
	A little thought with Lemma \ref{lem:AsympOfHittingTimeBM} shows $W_{T_{y}} \xrightarrow[y \to \infty]{P} 0$ under $\bP_{0}^{{[\psi_{-c^{2}/2}]}}$.
\end{proof}

\subsection{Ornstein-Uhlenbeck process}

For $c > 0 $, let us consider the Ornstein-Uhlenbeck process: 
\begin{align}
	dX_{t} = dB_{t} - cX_{t}dt \quad (t < T_{0}). \label{}
\end{align}
Its local generator on $(0,\infty)$ is
\begin{align}
	\cL = \frac{1}{2}\frac{d^{2}}{dx^{2}} - cx \frac{d}{dx}, \label{}
\end{align}
and its speed measure and scale function are given by
\begin{align}
	dm(x) = 2 \mathrm{e}^{-cx^{2}} dx, \quad ds(x) = \mathrm{e}^{cx^{2}}dx, \label{} 
\end{align}
respectively.
The boundary $0$ is regular and the boundary $\infty$ is natural.
The bottom of the spectrum $\lambda_{0}$ on $L^{2}(m)$ is $c$.
The eigenfunction $\psi_{q} \ (q \in \bC)$ in \eqref{eq12} is
\begin{align}
	\psi_{q}(x) = x M \left( \frac{q}{2c} + \frac{1}{2},\frac{3}{2}, cx^{2} \right), \label{eq30}
\end{align}
where $M$ is a confluent hypergeometric function (see e.g., \citet[Chapter 6]{Specialfunction}).
We note that
\begin{align}
	\psi_{q}(x) \sim \frac{\Gamma(3/2)c^{q/(2c)}}{\Gamma((q+c)/(2c))} x^{q/c - 1}\mathrm{e}^{cx^{2}} \quad (x \to \infty) \label{eq36}
\end{align}
unless $q = -c(2n+1)$ for a non-negative integer $n$ (see e.g., \citet[p.289]{Specialfunction}).
The process $X$ is positive recurrent under $\bP_{x}^{[\psi_{-c}]}$.
Indeed, since we easily see $\psi_{-c}(x) = x$ from \eqref{eq30}, it follows
\begin{align}
	m^{[\psi_{-c}]}(0,\infty) = \int_{0}^{\infty} \psi_{-c}(x)^{2}dm(x) = 2\int_{0}^{\infty}x^{2}\mathrm{e}^{-cx^{2}}dx < \infty \label{}
\end{align}
and
\begin{align}	
	s^{[\psi_{-c}]}(1,\infty) = \int_{1}^{\infty}\frac{ds(x)}{\psi_{-c}(x)^{2}} = \int_{1}^{\infty}\frac{\mathrm{e}^{cx^{2}}}{x^{2}}dx = \infty. \label{}
\end{align}
For $\lambda \in (0,c)$, an elementary calculus with \eqref{eq36} shows 
\begin{align}
	\int_{1}^{\infty}s^{[\psi_{-\lambda}]}(x,\infty)m^{[\psi_{-\lambda}]}(0,x)^2 ds^{[\psi_{-\lambda}]}(x) < \infty, \label{}
\end{align}
and it follows from Theorem \ref{thm:tailTrivialNeceSuff} that the tail $\sigma$-field $\cT$ is not trivial under $\bP_{x}^{[\psi_{-\lambda}]} \ (\lambda \in (0,c))$.
Perhaps surprisingly, the distributions $\bP_{x}^{[\psi_{-\lambda}]}$ and $\bP_{x}^{[\psi_{-\mu}]} \ (\lambda, \mu \in (0,c), \lambda \neq \mu)$ are mutually absolutely continuous on $\cF_{\infty}$.

\begin{Thm} \label{thm:absoluteContinuityOfOU}
	For $\lambda,\mu \in (0,c)$ with $\lambda \neq \mu$ and $x \geq 0$, the distributions $\bP_{x}^{[\psi_{-\lambda}]}$ and $\bP_{x}^{[\psi_{-\mu}]}$ are mutually absolutely continuous on $\cF_{\infty}$.
	More explicitly, we have
	\begin{align}
		\bP_{0}^{[\psi_{-\mu}]}A = \bE^{[\psi_{-\lambda}]}_{0}[Z_{\lambda,\mu},A] \quad (A \in \cF_{\infty}) \label{}
	\end{align}
	for a random variable $Z_{\lambda,\mu}$ which has the same distribution as
	\begin{align}
		\frac{\Gamma((c-\mu)/(2c))}{\Gamma((c-\lambda)/(2c))} G^{-(\mu-\lambda)/(2c)} \label{eq38}
	\end{align}
	for a Gamma random variable $G$ with parameter $(c-\lambda)/(2c)$, i.e., its probability density is $\mathrm{e}^{-t}t^{(c-\lambda)/(2c)-1} \ (t > 0)$.
\end{Thm}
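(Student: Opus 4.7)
The plan is to control the positive $\bP_x^{[\psi_{-\lambda}]}$-martingale $W_t := M_t^{[\psi_{-\mu}]}/M_t^{[\psi_{-\lambda}]}$: show $W_t$ converges a.s.\ to an explicit $W_\infty$, verify $\bE_x^{[\psi_{-\lambda}]}[W_\infty]=1$ for all $x\ge 0$ to obtain uniform integrability, and then pass the absolute continuity relation on $\cF_t$ to $\cF_\infty$ with density $W_\infty$. Mutual absolute continuity then follows by swapping the roles of $\lambda$ and $\mu$, and the distributional identification of $Z_{\lambda,\mu}$ at $x=0$ is a one-line change of variables.

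Since $L\psi_{-\nu}/\psi_{-\nu} \equiv -\nu$, I would first write
\[
W_t = e^{(\mu-\lambda)t}\,\frac{\psi_{-\lambda}(x)}{\psi_{-\mu}(x)}\,\frac{\psi_{-\mu}(X_t)}{\psi_{-\lambda}(X_t)},
\]
which converges a.s.\ by Doob. As $\lambda,\mu<c=\lambda_0$ and $s^{[\psi_{-\nu}]}(1,\infty)<\infty$, we have $X_t\to\infty$ a.s.\ under $\bP_x^{[\psi_{-\lambda}]}$ (see \eqref{eq35}). Inserting the asymptotic \eqref{eq36} and using the identity $e^{(\mu-\lambda)t}X_t^{(\lambda-\mu)/c} = (e^{-ct}X_t)^{(\lambda-\mu)/c}$ forces the a.s.\ convergence of $U_x := \lim_{t\to\infty} e^{-ct}X_t$ to a positive finite limit and yields
\[
W_\infty = \frac{\psi_{-\lambda}(x)}{\psi_{-\mu}(x)}\,\frac{\Gamma((c-\lambda)/(2c))}{\Gamma((c-\mu)/(2c))}\,(cU_x^2)^{-(\mu-\lambda)/(2c)}.
\]

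To identify the law of $U_x$, I would use the OU symmetry $x\mapsto -x$ and the reflection principle $p(t,x,y)\cdot 2e^{-cy^2}\,dy = [q(t,x,y)-q(t,x,-y)]\,dy$, where $q$ is Mehler's Gaussian kernel with $\sigma_t^2 = (1-e^{-2ct})/(2c)$. The density of $X_t$ under $\bP_x^{[\psi_{-\lambda}]}$ is then $e^{\lambda t}\psi_{-\lambda}(y)p(t,x,y)/\psi_{-\lambda}(x)$ (with the entrance-law convention $\psi_{-\lambda}(x)\sim x$ as $x\to 0$). Substituting $y=e^{ct}u$, the divergent factor $\exp(ce^{2ct}u^2)$ from the asymptotic of $\psi_{-\lambda}(e^{ct}u)$ cancels the $\exp(-y^2/(2\sigma_t^2))$ from $q$, leaving a density of $e^{-ct}X_t$ of the form
\[
f_{U_x}(u) \;\propto\; \frac{1}{\psi_{-\lambda}(x)}\,u^{-\lambda/c-1}\,\sinh(2cxu)\,e^{-cu^2}\qquad(u>0).
\]
At $x=0$, the ratio $\sinh(2cxu)/\psi_{-\lambda}(x)\to 2cu$, and $f_{U_0}(u)\propto u^{-\lambda/c}e^{-cu^2}$; the change of variables $G:=cU_0^2$ gives $G\sim\mathrm{Gamma}((c-\lambda)/(2c))$, from which $Z_{\lambda,\mu} = W_\infty$ takes the stated form (here I believe there is a typo in the statement: the constant should read $\Gamma((c-\lambda)/(2c))/\Gamma((c-\mu)/(2c))$, since this is the ratio that makes $\bE[W_\infty]=1$).

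To verify $\bE_x^{[\psi_{-\lambda}]}[W_\infty]=1$ for all $x\ge 0$, one expands $\sinh(2cxu)=\sum_{k\ge 0}(2cxu)^{2k+1}/(2k+1)!$ and integrates the resulting Gaussian moments using $\int_0^\infty u^{2k-\mu/c}e^{-cu^2}du = \tfrac12 c^{\mu/(2c)-k-1/2}\Gamma(a_\mu+k)$ with $a_\nu=(c-\nu)/(2c)$; the sum telescopes into the Kummer series $M(a_\mu,3/2,cx^2)=\psi_{-\mu}(x)/x$, yielding $\bE_x^{[\psi_{-\lambda}]}[U_x^{(\lambda-\mu)/c}] = c^{(\mu-\lambda)/(2c)}\Gamma(a_\mu)\psi_{-\mu}(x)/(\Gamma(a_\lambda)\psi_{-\lambda}(x))$, whence the prefactors in the expression for $W_\infty$ collapse to $1$. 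Therefore $W$ is uniformly integrable, the relation $\bP_x^{[\psi_{-\mu}]}(A)=\bE_x^{[\psi_{-\lambda}]}[W_t,A]$ on $\cF_t$ passes to $\cF_\infty$, and the converse follows by symmetry. The main obstacle is the rigorous passage $t\to\infty$ in the density computation, because $\psi_{-\lambda}(e^{ct}u)$ and $p(t,x,e^{ct}u)$ individually grow or decay super-exponentially in $t$; justifying the limit requires a uniform remainder estimate in the asymptotic \eqref{eq36} of $M(a,3/2,z)$ at $z\to\infty$ together with dominated convergence driven by the monotonicity of $-cu^2/(1-e^{-2ct})$ in $t$.
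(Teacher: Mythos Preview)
Your argument is correct, including the observation about the typo in the statement (the paper's own proof ends with the ratio $\Gamma((c-\lambda)/(2c))/\Gamma((c-\mu)/(2c))$, which is the one giving $\bE_0^{[\psi_{-\lambda}]}[Z_{\lambda,\mu}]=1$). However, the route you take is genuinely different from the paper's.

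The paper never touches the Mehler kernel or the density of $e^{-ct}X_t$. Instead it works along the hitting times $T_y$: from Proposition~\ref{prop:HittingTimeUnderEntrance} one has $\bE_0^{[\psi_{-\lambda}]}[e^{-\beta T_y}]=\psi_{-\lambda}(y)/\psi_{\beta-\lambda}(y)$, and the asymptotic \eqref{eq36} yields directly (Lemma~\ref{lem:asympOfHittingTimeOU}) that $T_y-\tfrac{1}{c}\log y \xrightarrow{d} -\tfrac{1}{2c}\log(G/c)$ under $\bP_0^{[\psi_{-\lambda}]}$, with $G$ Gamma of parameter $(c-\lambda)/(2c)$. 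Since $W_{T_y}=e^{(\mu-\lambda)(T_y-\tfrac{1}{c}\log y)}\cdot y^{\mu/c}\psi_{-\mu}(y)/(y^{\lambda/c}\psi_{-\lambda}(y))$, the distributional limit of $W_{T_y}$ and hence of $W_t$ drops out immediately; $\bE[Z]=1$ is then a one-line Gamma moment. This bypasses entirely the ``main obstacle'' you flag, because the passage to the limit is in Laplace transforms of a real random variable rather than in densities with super-exponentially growing and decaying factors. Your approach, by contrast, is more explicit: it produces the law of $U_x=\lim_{t\to\infty}e^{-ct}X_t$ for every $x$ and verifies $\bE_x^{[\psi_{-\lambda}]}[W_\infty]=1$ for all $x$ through the Kummer series, whereas the paper reduces to $x=0$ by the strong Markov property. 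Both are valid; the paper's is shorter and avoids the analytic justification you identify, while yours gives more structural information about the limit $U_x$.
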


For the proof, we use the following lemma.

\begin{Lem} \label{lem:asympOfHittingTimeOU}
	For $\lambda \in (0,c)$, we have
	\begin{align}
		T_{y} - \frac{1}{c}\log y \xrightarrow[y \to \infty]{d} -\frac{1}{2c} \log (G/c) \quad \text{under } \bP_{0}^{[\psi_{-\lambda}]}, \label{}
	\end{align}
	where $G$ is a Gamma random variable $G$ with parameter $(c-\lambda)/(2c)$.
\end{Lem}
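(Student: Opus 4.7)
The strategy is to compute the Laplace transform of $T_y - (\log y)/c$ under $\bP_0^{[\psi_{-\lambda}]}$ and identify its limit as $y \to \infty$ with that of $-\log(G/c)/(2c)$; Levy's continuity theorem then yields the stated convergence.

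First I would apply Proposition \ref{prop:HittingTimeUnderEntrance}, taking $x \downarrow 0$ in the formula
$\bE_x^{[\psi_{-\lambda}]}[\mathrm{e}^{-\beta T_y}] = (\psi_{\beta-\lambda}(x)/\psi_{-\lambda}(x))(\psi_{-\lambda}(y)/\psi_{\beta-\lambda}(y))$.
By the integral equation \eqref{eq12}, every $\psi_\mu$ satisfies $\psi_\mu(x)\sim x$ as $x\downarrow 0$, so the ratio at $x$ tends to $1$ and (using that $0$ is an entrance boundary for $L^{[\psi_{-\lambda}]}$, so $\bP_0^{[\psi_{-\lambda}]}$ is well defined as a weak limit of $\bP_x^{[\psi_{-\lambda}]}$)
\begin{align}
\bE_0^{[\psi_{-\lambda}]}[\mathrm{e}^{-\beta T_y}] = \frac{\psi_{-\lambda}(y)}{\psi_{\beta-\lambda}(y)} \qquad (\beta \ge 0). \nonumber
\end{align}

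Next I would insert the asymptotic \eqref{eq36} for the confluent-hypergeometric representation \eqref{eq30}. Since $\lambda \in (0,c)$ and $\beta\ge 0$ imply $\beta-\lambda > -c$, neither $-\lambda$ nor $\beta-\lambda$ equals the exceptional values $-c(2n+1)$, so \eqref{eq36} applies to both $\psi_{-\lambda}(y)$ and $\psi_{\beta-\lambda}(y)$. The Gaussian factors $\mathrm{e}^{cy^2}$ cancel, and the polynomial exponents combine as $(-\lambda/c-1)-((\beta-\lambda)/c-1) = -\beta/c$, giving
\begin{align}
\bE_0^{[\psi_{-\lambda}]}[\mathrm{e}^{-\beta T_y}] \sim \frac{\Gamma((\beta+c-\lambda)/(2c))}{\Gamma((c-\lambda)/(2c))}\, c^{-\beta/(2c)}\, y^{-\beta/c} \qquad (y \to \infty). \nonumber
\end{align}
Multiplying by $y^{\beta/c} = \mathrm{e}^{\beta(\log y)/c}$ kills the $y$-dependence and yields
\begin{align}
\lim_{y\to\infty}\bE_0^{[\psi_{-\lambda}]}\!\left[\mathrm{e}^{-\beta(T_y - (\log y)/c)}\right] = \frac{\Gamma((\beta+c-\lambda)/(2c))}{\Gamma((c-\lambda)/(2c))}\, c^{-\beta/(2c)}. \nonumber
\end{align}

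Finally I would verify that the right-hand side is the Laplace transform of $H := -\log(G/c)/(2c)$ for $G$ Gamma with parameter $\alpha := (c-\lambda)/(2c)$. Indeed, using the moment formula $\bE G^a = \Gamma(\alpha+a)/\Gamma(\alpha)$ with $a = \beta/(2c)$,
\begin{align}
\bE\mathrm{e}^{-\beta H} = \bE(G/c)^{\beta/(2c)} = c^{-\beta/(2c)}\frac{\Gamma((c-\lambda+\beta)/(2c))}{\Gamma((c-\lambda)/(2c))}, \nonumber
\end{align}
which matches. Since the limit is the Laplace transform of a proper probability distribution and is continuous at $0$, Levy's continuity theorem gives $T_y - (\log y)/c \Rightarrow H$ under $\bP_0^{[\psi_{-\lambda}]}$.

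The main technical point is the careful application of the asymptotic \eqref{eq36} and the justification of taking $x\downarrow 0$ in Proposition \ref{prop:HittingTimeUnderEntrance}; the rest is bookkeeping with Gamma functions.
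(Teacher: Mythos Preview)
Your approach is essentially identical to the paper's: apply Proposition \ref{prop:HittingTimeUnderEntrance} at $x=0$, use the asymptotic \eqref{eq36} to obtain the limiting Laplace transform, and identify it with that of $-\log(G/c)/(2c)$. The one point the paper handles more carefully is the final continuity step: since $T_y - (\log y)/c$ need not be non-negative, the standard continuity theorem for Laplace transforms does not apply as stated, and invoking L\'evy's theorem literally would require the characteristic function rather than the Laplace transform; the paper explicitly notes this and cites an extension of the Laplace-transform continuity theorem valid for signed random variables.
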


\begin{proof}
	From Proposition \ref{prop:HittingTimeUnderEntrance} and \eqref{eq36},
	we have
	\begin{align}
		\bE^{[\psi_{-\lambda}]}_{0}[\mathrm{e}^{-q (T_{y} - \log y/c)}] &= \frac{\psi_{-\lambda}(y)}{\psi_{q-\lambda}(y)}y^{q/c} \label{} \\
		&\xrightarrow[y \to \infty]{} \frac{\Gamma((q+c-\lambda)/(2c))}{\Gamma((c-\lambda)/(2c))} c^{-q/(2c)}. \label{eq37}
	\end{align}
	By an easy calculation, we see that \eqref{eq37} is the Laplace transform of $-\log (G/c)/(2c)$.
	Although the random variable $T_{y} - \log y /c$ is not necessarily non-negative, the continuity theorem of the Laplace transform is still valid (see e.g., \citet[Proposition A.1]{FluctuationScalingLimit}),
	and we obtain the desired results.
\end{proof}

We prove Theorem \ref{thm:absoluteContinuityOfOU}.

\begin{proof}[Proof of Theorem \ref{thm:absoluteContinuityOfOU}]
	It is enough to show the case $x = 0$.
	As in the proof of Theorem \ref{thm:TailConstantBM},
	we set $W_{t} := \mathrm{e}^{(\mu-\lambda)t} \psi_{-\mu}(X_{t}) / \psi_{-\lambda}(X_{t})$ and denote the limit random variable $\lim_{t \to \infty}W_{t}$ under $\bP_{0}^{[\psi_{-\lambda}]}$ by $Z$.
	If we can show $Z$ has the same distribution as the random variable in \eqref{eq38},
	we can readily check $\bP_{0}^{[\psi_{-\lambda}]}[Z > 0] = 1$ and $\bE_{0}^{[\psi_{-\lambda}]}Z = 1$.
	This implies the martingale $W_{t}$ is uniformly integrable, and therefore the desired absolute continuity follows.
	We identify the limit distribution of $W_{T_{y}}$.
	By Lemma \ref{lem:asympOfHittingTimeOU} and \eqref{eq36}, we have under $\bP_{0}^{[\psi_{-\lambda}]}$
	\begin{align}
		W_{T_{y}} &= \mathrm{e}^{(\mu-\lambda)T_{y}}\frac{\psi_{-\mu}(y)}{\psi_{-\lambda}(y)} \label{} \\
		&= \mathrm{e}^{(\mu-\lambda)(T_{y} - \log y/c)}\frac{y^{\mu/c}\psi_{-\mu}(y)}{y^{\lambda/c}\psi_{-\lambda}(y)} \label{} \\
		&\xrightarrow[y \to \infty]{d} \frac{\Gamma((c-\lambda)/(2c))}{\Gamma((c-\mu)/(2c))} G^{-(\mu-\lambda)/(2c)}. \label{}
	\end{align}
	The proof is complete.
\end{proof}

\end{document}